\newtheorem{Theorem}{Theorem}[section]
\newtheorem{Lemma}{Lemma}[section]
\newtheorem{Example}{Example}[section]
\newtheorem{Construction}{Construction}[section]
\newcommand{\Z}{\mathbb{Z}}
\def \leq {\leqslant}
\def \geq {\geqslant}
\let\oldproofname=\proofname
\renewcommand{\proofname}{\rm\bf{\oldproofname}}
\begin{document}

\title{Existence of magic rectangle sets over finite abelian groups}

\author[a]{Shikang Yu}
\author[a]{Tao Feng \thanks{Supported by NSFC under Grant 12271023}}
\author[a]{Hengrui Liu}
\affil[a]{School of Mathematics and Statistics, Beijing Jiaotong University, Beijing, 100044, P.R. China}
\renewcommand*{\Affilfont}{\small\it}
\renewcommand\Authands{ and }

\affil[ ]{healthyu@bjtu.edu.cn, tfeng@bjtu.edu.cn, henryleo@bjtu.edu.cn }
\date{}

\maketitle
\begin{abstract}
Let $a$, $b$ and $c$ be positive integers. Let $(G,+)$ be a finite abelian group of order $abc$. A $G$-magic rectangle set MRS$_G(a,b;c)$ is a collection of $c$ arrays of size $a\times b$ whose entries are elements of a group $G$, each appearing exactly once, such that the sum of each row in every array equals a constant $\gamma\in G$ and the sum of each column in every array equals a constant $\delta\in G$. This paper establishes the necessary and sufficient conditions for the existence of an MRS$_G(a,b;c)$ for any finite abelian group $G$, thereby confirming a conjecture presented by Cichacz and Hinc.
\end{abstract}

\noindent {\bf Keywords}: magic square; magic rectangle set; abelian group


\section{Introduction}\label{sec:intro}

Let $m$ be a positive integer. A {\em magic square} of order $m$ is an $m\times m$ array with entries from the set $\{1,2,\ldots,m^2\}$, each appearing exactly once, such that the sum of the $m$ numbers in each row, each column, and each of the two main diagonals is the same. A magic square of order $m$ exists for any positive integer $m\neq 2$. We refer the reader to \cite{km} for a brief history and  constructions for magic squares.

Magic rectangles are a natural generalization of magic squares. Let $a$ and $b$ be positive integers. A {\em magic rectangle} MR$(a,b)$ is an $a\times b$ array with entries from the set $\{1,2,\ldots,ab\}$, each appearing exactly once,  such that the sum of the entries in each row is a constant $\alpha$, and the sum of the entries in each column is another constant $\beta$. The total sum of all entries in an MR$(a,b)$ is $a\alpha=b\beta=1+2+\cdots+ab=ab(ab+1)/2$. Thus $\alpha=b(ab+1)/2$ and $\beta=a(ab+1)/2$. Since $\alpha$ and $\beta$ are both integers, $a$ and $b$ must have the same parity. Harmuth \cite{Har1,Har2} settled the existence problem of magic rectangles as early as 1881.

\begin{Theorem}\label{Thm:MR} {\rm \cite{Har1,Har2}}
For $a,b>1$, there exists an MR$(a,b)$ if and only if $a\equiv b\pmod{2}$ and $(a,b)\neq (2,2)$.
\end{Theorem}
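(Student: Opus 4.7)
The plan is to handle necessity by a parity computation plus a direct check of $(2,2)$, and to establish sufficiency by constructing an MR$(a,b)$ explicitly for every admissible pair, splitting into the odd-odd and even-even regimes.

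\textbf{Necessity.} Summing all entries gives $1+2+\cdots+ab = ab(ab+1)/2$, which forces the row sum to be $\alpha = b(ab+1)/2$ and the column sum to be $\beta = a(ab+1)/2$. If $a$ and $b$ have opposite parities, then $ab$ is even, so $ab+1$ is odd; since exactly one of $a,b$ is odd, either $b(ab+1)$ or $a(ab+1)$ is odd, so $\alpha$ or $\beta$ fails to be integral. For $(a,b)=(2,2)$, the entries $\{1,2,3,4\}$ would have to satisfy $x+y=x+z=5$ along adjacent row/column pairs, forcing $y=z$ and violating distinctness.

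\textbf{Sufficiency.} Without loss of generality $a\le b$. I would treat the two parity cases separately, each time exploiting the involution $k\leftrightarrow ab+1-k$.

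\emph{Both $a,b$ odd.} Pair each entry $k$ with $ab+1-k$; since $ab$ is odd, the single middle element $(ab+1)/2$ is unpaired. For $a=b$ odd, a classical Siamese (staircase) construction yields a magic square, hence an MR$(a,a)$. For $3\le a<b$ both odd, I would place the middle element centrally and distribute the remaining pairs in a skew-symmetric pattern about the center, so that each row accumulates $(b-1)/2$ complementary pairs plus one balancing entry (and similarly for columns); the row sum then collapses to $\alpha$ and the column sum to $\beta$ by counting.

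\emph{Both $a,b$ even.} Start with the strip case $(2,b)$, $b\ge 4$ even: place one pair $\{i,ab+1-i\}$ in each column (so every column sum is $ab+1=\beta$), then assign the small and large partners to the two rows so that each row totals $\alpha=b(ab+1)/2$; for $b\ge 4$ this balanced-allocation problem admits an explicit greedy solution. For general $a,b\ge 4$ even, I would proceed inductively, stacking an MR$(2,b)$ on top of an MR$(a-2,b)$ (or juxtaposing horizontally) with carefully chosen additive shifts applied to the entries so that the union is precisely $\{1,\dots,ab\}$ and the constant row/column sums are preserved in each block.

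The main obstacle I anticipate is the even-even inductive step: the shifts must be prescribed so that no entry is repeated across blocks while the row and column sums still combine to the target values $\alpha$ and $\beta$. A secondary difficulty is the odd-odd case with $a<b$, where the central placement of $(ab+1)/2$ and the ensuing pair distribution require careful bookkeeping; once the placement is pinned down, however, verifying the magic property reduces to routine arithmetic.
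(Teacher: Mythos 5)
First, note that the paper does not prove this theorem at all: it is Harmuth's classical result, quoted with citations to \cite{Har1,Har2} (and the paper points to Bier--Kleinschmidt and Hagedorn for modern proofs), so there is no in-paper argument to compare yours against. Judged on its own, your necessity argument is complete and correct: the divisibility computation for $\alpha=b(ab+1)/2$ and $\beta=a(ab+1)/2$ rules out opposite parities, and the $(2,2)$ case is killed by the $y=z$ contradiction.

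The sufficiency direction, however, has a genuine gap --- indeed it defers exactly the part that constitutes the theorem. In the odd-odd case with $a<b$, ``distribute the remaining pairs in a skew-symmetric pattern about the center'' is not a construction; specifying which complementary pairs go into which row and column so that both sets of sums work out simultaneously is the entire content of Harmuth's proof (and of the centrally symmetric rectangles of Bier--Kleinschmidt, which is the right idea but must be carried out explicitly). In the even-even case, the proposed induction of stacking an MR$(2,b)$ on an MR$(a-2,b)$ with additive shifts fails as stated: already for $a=b=4$, putting an MR$(2,4)$ on $\{1,\dots,8\}$ above a shifted MR$(2,4)$ on $\{9,\dots,16\}$ gives column sums $9+25=34=\beta$ as required, but the top block's row sums are $18$ while the bottom block's are $50$, neither equal to $\alpha=34$; no uniform shift can fix this, because the two blocks' rows must each hit the \emph{same} constant $\alpha$, which forces the two blocks to use entry sets with equal averages rather than an initial segment and its translate. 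Repairing this requires interleaving the entries between blocks (or an entirely different decomposition), which is again the substantive work. You correctly identify both of these as the obstacles, but identifying an obstacle is not the same as overcoming it, so as written the proof establishes only the ``only if'' direction.
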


Following the work by Bier and Rogers \cite{br}, Bier and Kleinschmidt \cite{bk} provided a simple proof for Theorem \ref{Thm:MR} by utilizing centrally symmetric rectangles. Subsequently, Hagedorn \cite{Hage-1} further simplified Bier and Kleinschmidt's method, resulting in an entirely theoretical proof that does not depend on any concrete example of magic rectangles given in \cite[Section 2.7]{bk}.

Motivated by the need to schedule a special kind of incomplete round robin tournaments, called {\em handicap tournaments}, Froncek \cite{Fro1} proposed a generalization of magic rectangles, which he termed magic rectangle sets. Let $a$, $b$ and $c$ be positive integers. A {\em magic rectangle set} MRS$(a,b;c)$ is a collection of $c$ arrays of size $a\times b$, whose entries are elements of $\{1,2,\ldots,abc\}$, each appearing exactly once, such that the sum of each row in every $a\times b$ array equals a constant $b(abc+1)/2$ and the sum of each column in every $a\times b$ array equals another constant $a(abc+1)/2$. Froncek \cite{Fro2} settled the existence problem for magic rectangle sets.

\begin{Theorem}\emph{\cite[Theorem 3.2]{Fro2}}
For $a,b>1$, there exists an MRS$(a,b;c)$ if and only if either $abc\equiv 1\pmod{2}$ or $a\equiv b\equiv 0\pmod{2}$, and $(a,b)\neq (2,2)$.
\end{Theorem}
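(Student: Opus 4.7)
\emph{Necessity.} The common row sum $\gamma=b(abc+1)/2$ and column sum $\delta=a(abc+1)/2$ must be integers. If $abc$ is even then $abc+1$ is odd, forcing $2\mid a$ and $2\mid b$; otherwise $abc$ is odd. For $(a,b)=(2,2)$, equating the two row sums and the two column sums in any $2\times 2$ array forces $x_{11}=x_{22}$ and $x_{12}=x_{21}$, contradicting distinctness of the four entries.

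\emph{Sufficiency.} In both remaining parameter regimes Theorem \ref{Thm:MR} supplies a magic rectangle $M$ of size $a\times b$ with entries $m_{rs}$ forming a permutation of $\{1,\ldots,ab\}$. My plan is to build the $c$ arrays by
\[
M_i(r,s)=c(m_{rs}-1)+\sigma_i(r,s),\qquad i=1,\ldots,c,
\]
for a family of ``label'' matrices $\sigma_i\colon\{1,\ldots,a\}\times\{1,\ldots,b\}\to\{1,\ldots,c\}$ satisfying: (S1) at each cell $(r,s)$, the values $\{\sigma_i(r,s):1\leq i\leq c\}$ form a permutation of $\{1,\ldots,c\}$; (S2) every row of every $\sigma_i$ sums to $b(c+1)/2$; (S3) every column of every $\sigma_i$ sums to $a(c+1)/2$. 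Condition (S1) ensures that the $M_i$ collectively partition $\{1,\ldots,abc\}$, and the magic property of $M$ together with (S2)--(S3) yields the required row sum $b(abc+1)/2$ and column sum $a(abc+1)/2$ in every $M_i$ by a one-line computation, since $cb(ab-1)/2+b(c+1)/2=b(abc+1)/2$.

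When $a\equiv b\equiv 0\pmod 2$, the checkerboard $2$-colouring of the $a\times b$ grid into classes $P$ and $N$ has $b/2$ cells of each class per row and $a/2$ per column. Setting $\sigma_i=i$ on $P$ and $\sigma_i=c+1-i$ on $N$ satisfies (S1)--(S3) immediately, so this case is disposed of.

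When $abc\equiv 1\pmod 2$, the three parameters $a,b,c$ are all odd and no row or column of $M$ can be split into equal halves; the checkerboard trick collapses. This is where I expect the bulk of the work to lie, and is the main obstacle of the proof. A natural first attempt is a cyclic labelling $\sigma_i(r,s)=1+((\alpha r+\beta s+i-1)\bmod c)$ with $\gcd(\alpha,c)=\gcd(\beta,c)=1$, which satisfies (S1) automatically but only achieves (S2)--(S3) when $c\mid b$ and $c\mid a$. For the remaining odd triples I would fall back on an inductive strategy---reducing modulo the prime factorisation of $c$ and splicing together MRSs of smaller parameters---combined with explicit combinatorial designs that handle the small-base cases (for instance when $\min(a,b)$ is very small compared with $c$). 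Producing such labels $\sigma_i$ uniformly over all admissible odd triples $(a,b,c)$ with $a,b>1$ is the delicate technical core; once it is accomplished, the verification of the magic property in each $M_i$ is routine.
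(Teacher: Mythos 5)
The paper does not prove this statement; it is quoted from Froncek's paper \cite{Fro2}, so there is no in-paper argument to compare against. Judged on its own terms, your proposal is sound in its easy parts but has a genuine gap exactly where the cited theorem has its content.

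What is correct: the necessity argument (integrality of $\gamma=b(abc+1)/2$ and $\delta=a(abc+1)/2$ forces $2\mid a$ and $2\mid b$ when $abc$ is even, and the $2\times 2$ exclusion follows from $x_{12}=x_{21}$) is complete. Your lifting framework $M_i(r,s)=c(m_{rs}-1)+\sigma_i(r,s)$ with conditions (S1)--(S3) is also correct: (S1) gives the partition of $\{1,\ldots,abc\}$, and the arithmetic $cb(ab-1)/2+b(c+1)/2=b(abc+1)/2$ checks out, so the whole theorem does reduce to producing the label matrices $\sigma_i$. The checkerboard construction settles the case $a\equiv b\equiv 0\pmod 2$ completely.

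The gap is the case $abc\equiv 1\pmod 2$, i.e.\ $a,b,c$ all odd, which is precisely the subject of the paper you are citing (its title is ``Magic rectangle sets of odd order''). You only exhibit label matrices when $c\mid a$ and $c\mid b$; the cyclic labelling $\sigma_i(r,s)=1+((\alpha r+\beta s+i-1)\bmod c)$ genuinely fails otherwise, because $\sum_{s}((x_s+j)\bmod c)$ is not constant in $j$ unless each residue class appears equally often among the $x_s$, which forces $c\mid b$. The ``inductive strategy combined with explicit combinatorial designs'' is announced but not executed: no base cases are constructed, no splicing lemma is stated, and it is not even argued that matrices $\sigma_i$ satisfying (S1)--(S3) exist for, say, $(a,b,c)=(3,3,5)$. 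Since the theorem is an ``if and only if,'' the sufficiency direction is unproved for all odd triples outside the trivial divisibility regime, and that is the bulk of the statement. To close the gap you would need either to carry out the construction of the $\sigma_i$ for all odd $a,b,c$ with $a,b>1$ (which amounts to building an MRS$(a,b;c)$ on the symbol set $\{1,\ldots,c\}$ with repetition governed by (S1), a problem of comparable difficulty to the original), or to follow Froncek's actual route of constructing the $c$ arrays directly.
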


Let $a$, $b$ and $c$ be positive integers. Let $(G,+)$ be a finite abelian group of order $abc$. A {\em $G$-magic rectangle set} MRS$_G(a,b;c)$ is a collection of $c$ arrays of size $a\times b$ with entries from the group $G$, each appearing exactly once, such that all row sums in every $a\times b$ array equal a constant $\gamma\in G$ and all column sums in every $a\times b$ array equal another constant $\delta\in G$. When $c=1$, an MRS$_G(a,b;1)$ is called a {\em $G$-magic rectangle}, and is simply written as an MR$_G(a,b)$.

Denote by $\Z_v$ the additive group of integers modulo $v$. Evans \cite[Theorem 1]{Evans} investigated the existence of an MR$_{\Z_v}(a,b)$, which he termed a {\em modular magic rectangle}. Cichacz and Hinc \cite{CH21-1} generalized Evans's result to any finite abelian group.

\begin{Theorem}\label{Thm:MR-G} {\rm \cite[Theorem 3.11]{CH21-1}}
For any abelian group $G$ of order $ab$ with $a,b>1$, an MR$_G(a,b)$ exists if and only if either $a\equiv b\pmod{2}$, or the Sylow $2$-subgroup of $G$ is noncyclic.
\end{Theorem}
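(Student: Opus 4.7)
The plan is to split the proof into necessity and sufficiency.

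\textbf{Necessity.} Summing all entries of an MR$_G(a,b)$ row-by-row and column-by-column yields $a\gamma = b\delta = \sigma_G := \sum_{g \in G} g$. A standard pairing $g \leftrightarrow -g$ gives $\sigma_G = 0$ unless $G$ contains a unique element of order two --- equivalently, the Sylow $2$-subgroup $P$ of $G$ is cyclic and nontrivial --- in which case $\sigma_G = \iota$, the unique involution. Assuming $P$ is cyclic and nontrivial, suppose for contradiction that $a \not\equiv b \pmod{2}$, WLOG $a$ odd and $b$ even. Decomposing $G = P \oplus Q$ with $|Q|$ odd, and writing $b = 2^k m$ with $m$ odd, forces $|P| = 2^k$ (since $a$ is odd, $2^k$ is the largest power of $2$ dividing $|G|$); then $bG = (2^k m) P \oplus (2^k m) Q = \{0\} \oplus mQ$, so $\iota \in P \setminus \{0\}$ does not lie in $bG$ and the equation $b\delta = \iota$ is unsolvable --- contradiction.

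\textbf{Sufficiency.} The central tool is a product construction: if MR$_{G_1}(a_1,b_1)$ and MR$_{G_2}(a_2,b_2)$ both exist, placing $(x_{ij}, y_{k\ell})$ at position $((i,k),(j,\ell))$ yields an MR$_{G_1 \oplus G_2}(a_1 a_2,\, b_1 b_2)$, since row and column sums decouple coordinate-wise. Combined with Theorem~\ref{Thm:MR} (reduced modulo $ab$ to give an MR$_{\Z_{ab}}(a,b)$ whenever $a \equiv b \pmod{2}$ and $(a,b) \neq (2,2)$) and with direct constructions for small non-split bases (e.g.\ a $2 \times 2$ array for $G = \Z_2 \oplus \Z_2$, or Graeco-Latin-square fillings of MR$_{\Z_p \oplus \Z_p}(p,p)$ for odd primes $p$), iterating the product construction along a Sylow-prime decomposition of $G$ covers the equal-parity case $a \equiv b \pmod{2}$ for every abelian $G$ of order $ab$.

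\textbf{Main obstacle.} The hardest case is mixed parity with $P$ noncyclic, where the classical integer MR does not exist and no $\Z_{ab}$-reduction is available. The plan is to exploit that $P$ noncyclic supplies at least two independent involutions (a Klein four-subgroup $\Z_2 \oplus \Z_2 \leq P$), enabling a construction that the cyclic case cannot support. A natural approach is a \emph{centrally symmetric} construction in the spirit of Bier--Kleinschmidt~\cite{bk} and Hagedorn~\cite{Hage-1}: pair opposite positions $(i,j) \leftrightarrow (a+1-i,\, b+1-j)$ so that each pair sums to a fixed center-constant $c \in G$, reducing the problem to filling a half-rectangle. The multiple involutions supplied by a noncyclic $P$ provide the self-paired elements needed along the axis of symmetry, while the elements $g$ with $g \neq -g + c$ pair up naturally. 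Verifying that such a half-rectangle exists in every mixed-parity instance --- especially when $b/2$ or $a/2$ is small and the half-rectangle itself has delicate $2$-Sylow structure --- is the technical heart of the proof.
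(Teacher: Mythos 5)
First, a framing point: the paper does not prove this theorem; it imports it from \cite{CH21-1}, so there is no in-paper proof to compare against. Judged on its own, your necessity argument is correct and complete, and it runs parallel to the paper's proof of Lemma~\ref{lem:necessary}: $\sigma_G$ equals the unique involution $\iota$ exactly when the Sylow $2$-subgroup is cyclic and nontrivial, and then $b\delta=\iota$ is unsolvable for $a$ odd, $b$ even, since $bG=\{0\}\oplus Q$ misses $\iota$. The product construction and the mod-$ab$ reduction of Theorem~\ref{Thm:MR} are also sound as far as they go.

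The sufficiency half, however, has two genuine gaps. (1) Even in the equal-parity case, iterated Kronecker products do not reach every abelian group. Take $G=\Z_3\oplus\Z_{15}\cong\Z_3\oplus\Z_3\oplus\Z_5$ with $(a,b)=(9,5)$: any splitting $G=G_1\oplus G_2$ with $a=a_1a_2$, $b=b_1b_2$, $|G_i|=a_ib_i$ forces either a degenerate factor with $b_i=1$ or $a_i=1$ (an MR$_{G_i}(n,1)$ with $n=|G_i|>1$ cannot exist, since each row is a single entry and all rows must have equal sum) or a non-integral side (e.g.\ $G_1=\Z_3\oplus\Z_3$ with $(a_1,b_1)=(3,3)$ leaves $(a_2,b_2)=(3,5/3)$). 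What is actually needed here is a coset-lifting construction in the spirit of Constructions~\ref{con:directsum} and~\ref{con:odd} --- build an MR over a subgroup or quotient and distribute coset representatives so that their net contribution to every row and column is zero --- which is how \cite{CH21-1} proceeds. (A smaller instance of the same problem: MR$_{\Z_4}(2,2)$ exists and is required by the statement, but cannot be obtained by reducing the nonexistent integer MR$(2,2)$.) (2) More seriously, the case that gives the theorem its content --- $a\not\equiv b\pmod 2$ with noncyclic Sylow $2$-subgroup --- is left as a plan: you name central symmetry as a candidate tool and explicitly defer ``the technical heart.'' There is no integer rectangle to reduce from in this case, so the construction must be built directly over the group (the analogues in this paper are the explicit base arrays of Lemmas~\ref{lem:p,2,2}--\ref{lem:p,2,4} combined with the doubling Construction~\ref{con:2,2}); without carrying that out, the proof is incomplete exactly where the statement is nontrivial.
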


Furthermore, Cichacz and Hinc \cite{CH21-1} examined the existence of a $G$-magic rectangle set for any finite abelian group $G$. They established the following necessary condition for the existence of an MRS$_G(a,b;c)$ (see \cite[Observations 1.6, 1.7 and 3.8]{CH21-1}). We outline the proof for completeness.

\begin{Lemma} \label{lem:necessary} \emph{\cite{CH21-1}}
Let $G$ be an abelian group of order $abc$ with $a,b>1$. If there exists an MRS$_G(a,b;c)$, then either $a\equiv b\equiv 0 \pmod{2}$, or the Sylow $2$-subgroup of $G$ is trivial or noncyclic, and when $2\in\{a,b\}$, $ab\equiv 0\pmod{4}$.
\end{Lemma}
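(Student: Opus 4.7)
The plan is to exploit two invariants: the total sum $\sigma(G) := \sum_{g \in G} g$, together with, when $a=2$, a pairing argument on columns. The starting point is the classical fact that elements of order greater than $2$ cancel with their inverses, so $\sigma(G) = 0$ unless the Sylow $2$-subgroup $G_2$ of $G$ is nontrivial and cyclic, in which case $\sigma(G) = \iota$, the unique involution of $G$. Given an MRS$_G(a,b;c)$ with row-sum $\gamma$ and column-sum $\delta$, summing the $abc$ entries across all $c$ arrays in two ways yields the master identity
\[
  ca\gamma \;=\; cb\delta \;=\; \sigma(G).
\]

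To obtain the first conclusion, I would assume $G_2$ is nontrivial and cyclic of order $2^s$, so $\sigma(G) = \iota \neq 0$. Using the direct decomposition $G = G_2 \oplus G_{2'}$ and projecting the master identity onto $G_2 \cong \Z_{2^s}$ gives $ca\gamma_2 = \iota$, where $\gamma_2$ is the image of $\gamma$. If $b$ were odd, then $v_2(ca) = v_2(a) + v_2(c) = v_2(abc) - v_2(b) = s$, so $ca \equiv 0 \pmod{2^s}$ and $ca\gamma_2$ vanishes in $\Z_{2^s}$, contradicting $\iota \neq 0$. Symmetrically $a$ cannot be odd, so both $a$ and $b$ must be even.

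For the second conclusion, I would assume without loss of generality that $a = 2$. In each array the $b$ columns are pairs summing to $\delta$, and since every element of $G$ appears exactly once across the $c$ arrays, the $bc$ columns partition $G$ into pairs $\{x, \delta - x\}$ of distinct elements. In particular no $x \in G$ satisfies $2x = \delta$, so $\delta \notin 2G$; on the other hand $b\delta = 2\gamma \in 2G$. Since $G/2G$ is an elementary abelian $2$-group in which $\delta$ has nonzero image, $b\delta \in 2G$ forces $b$ to be even, whence $ab = 2b \equiv 0 \pmod 4$. The only delicate point I anticipate is the $2$-adic bookkeeping in the first part, which becomes transparent once one invokes $v_2(|G|) = v_2(abc) = s$ to rule out an odd factor in $ca$.
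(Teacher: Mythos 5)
Your proof is correct and follows essentially the same route as the paper: the identity $ca\gamma = cb\delta = \sum_{g\in G}g$ handles the first alternative, and the observation that a two-entry column summing to $\delta$ forbids any $x$ with $2x=\delta$ handles the case $2\in\{a,b\}$. The only differences are cosmetic — you use $2$-adic valuations where the paper multiplies the identity by $(ca-1)b$, and you argue in $G/2G$ to conclude $b$ is even directly, where the paper decomposes $G$ as $S_2\oplus H$ and derives a contradiction from $b$ odd.
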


\begin{proof}
If the Sylow $2$-subgroup of $G$ is cyclic and nontrivial, then $G$ has the unique element of order $2$, denoted by $\theta$. Let $M$ be an MRS$_G(a,b;c)$. Let $\gamma$ and $\delta$ be the row sum and the column sum of an $a\times b$ array in $M$, respectively. Then
$ca\gamma=cb\delta=\sum_{g\in G}g=\theta$. Thus $(ca-1)b\cdot ca\gamma=(ca-1)b\cdot \theta$ and $(cb-1)a\cdot cb\delta=(cb-1)a\cdot \theta$. Since $|G|=abc$ and $a,b>1$, we have $b\cdot \theta=0$ and $a\cdot \theta=0$, which implies that $a\equiv b\equiv 0 \pmod{2}$. Therefore, if there is an MRS$_G(a,b;c)$, then either the Sylow $2$-subgroup of $G$ is trivial or noncyclic, or $a\equiv b\equiv 0 \pmod{2}$.

Furthermore, consider the case of $a=2$ and $b=2l+1$ for some positive integer $l$. Note that $G\cong S_2\oplus H$, where $S_2$ is the Sylow $2$-subgroup of $G$ and $H$ is the largest subgroup of odd order of $G$. If there exists an MRS$_{S_2\oplus H}(2,2l+1;c)$ with  $\gamma=(\gamma_1,\gamma_2)$ and $\delta=(\delta_1,\delta_2)$ as the row sum and the column sum, respectively, where $\gamma_1,\delta_1\in S_2$ and $\gamma_2,\delta_2\in H$, then $(2l+1)\delta_1=2\gamma_1$. Since $\gcd(2l+1,2)=1$, there exists $x\in S_2$ such that $\delta_1=2x$. Since $|H|$ is odd, there exists $y\in H$ such that $\delta_2=2y$. It follows that $\delta=(\delta_1,\delta_2)=2(x,y)$. Without loss of generality, we can assume that the element in the first row and the first column in the first $2\times (2l+1)$ array is $(x,y)$, but then the element in the second row and the first column in this array has to be $(x,y)$ as well, a contradiction. Therefore, if there exists an MRS$_G(a,b;c)$, and one of $a$ and $b$ is $2$, then the other cannot be odd.
\end{proof}

Cichacz and Hinc \cite[Conjecture 5.1]{CH21-1} conjectured that the necessary conditions for the existence of an MRS$_G(a,b;c)$ given in Lemma \ref{lem:necessary} are also sufficient. They confirmed this conjecture when there do not exist positive integers $l$ and $n$ such that either $a=2l+1$ and $b=2^n$, or $a=2^n$ and $b=2l+1$.

\begin{Lemma}\label{thm:CH1} \emph{\cite[Theorem 3.9]{CH21-1}}
For any abelian group $G$ of order $abc$, when $a,b>1$ and $\{a,b\}\neq\{2l+1,2^n\}$ for some positive integers $l$ and $n$, an MRS$_G(a,b;c)$ exists if and only if either $a\equiv b\equiv 0 \pmod{2}$ or the Sylow $2$-subgroup of $G$ is trivial or noncyclic.
\end{Lemma}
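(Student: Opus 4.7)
The necessity follows directly from Lemma \ref{lem:necessary}, so the plan is to construct an MRS$_G(a, b; c)$ under the stated hypotheses. The overall approach is to decompose $G \cong H \oplus K$ with $|H| = ab$ and $|K| = c$, base the construction on an MR$_H(a, b)$ provided by Theorem \ref{Thm:MR-G}, and then lift it to $c$ arrays indexed by $K$.

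First I would establish a product lemma of the following shape: given $G \cong H \oplus K$ as above, an MR$_H(a, b)$ $M$ with row sum $\gamma_H$ and column sum $\delta_H$, and a family of labels $\{f_k : [a] \times [b] \to K\}_{k \in K}$ satisfying (i) for each position $(i,j)$ the map $k \mapsto f_k(i,j)$ is a bijection $K \to K$, and (ii) each $f_k$ has constant row sum $\alpha$ and constant column sum $\beta$ in $K$, the assignment $A_k(i,j) = M(i,j) + f_k(i,j)$ yields an MRS$_G(a,b;c)$. Condition (i) ensures every element of $G$ appears exactly once, while (ii) makes every row of each $A_k$ sum to $\gamma_H + \alpha$ and every column sum to $\delta_H + \beta$.

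Next I would choose $H$ so that the hypothesis of Theorem \ref{Thm:MR-G} is met. When $a \equiv b \pmod 2$ any subgroup of order $ab$ suffices. When $a, b$ have opposite parity, the excluded condition $\{a,b\} \neq \{2l+1, 2^n\}$ forces the even one to have an odd prime factor, and the assumption that the Sylow $2$-subgroup of $G$ is noncyclic lets me choose $H$ whose Sylow $2$-subgroup is also noncyclic, splitting $G$ by its $p$-primary decomposition and placing the full $2$-part of $ab$ inside the noncyclic $2$-Sylow of $G$. Theorem \ref{Thm:MR-G} then supplies the MR$_H(a,b)$.

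The main obstacle is producing the labels $\{f_k\}_{k \in K}$. The naive shift $f_k(i,j) = k + \phi(i,j)$ fails, since the row sum of $f_k$ becomes $bk + \sum_j \phi(i,j)$, which depends on $k$ unless $bK = 0$. Instead, I would combine an auxiliary magic-rectangle-like array $\phi$ over $K$, an $a \times b$ array with entries from $K$ having constant row and column sums (possibly with repetitions, since $ab$ need not equal $|K|$), with a Latin-square arrangement on $K$ that enforces (i) while keeping the sums in (ii) constant. Explicit constructions of such $\phi$ and the compatible Latin structure depend on the parity of $c$ and the $2$-part of $K$; they can be assembled on each Sylow $p$-subgroup of $K$ separately, invoking the centrally symmetric rectangle technique of Bier--Kleinschmidt and Hagedorn when $a, b$ are both even, and direct cyclic constructions when $|G|$ is odd. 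The hypothesis $\{a,b\} \neq \{2l+1, 2^n\}$ enters precisely to ensure that a compatible auxiliary array exists at every stage, and this is exactly where the construction breaks down in the remaining case handled by the present paper.
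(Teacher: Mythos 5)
This lemma is not proved in the paper at all: it is imported verbatim from \cite[Theorem 3.9]{CH21-1}, so the only honest comparison is with the machinery the paper borrows from that source (Constructions \ref{con:directsum} and \ref{con:odd} together with explicit small building blocks). Measured against that, your proposal has a fatal architectural flaw at its very first step. You assume a decomposition $G\cong H\oplus K$ with $|H|=ab$ and $|K|=c$, but an abelian group of order $abc$ need not admit a direct summand of order $ab$: for $G=\Z_{27}$, $a=b=3$, $c=3$ the lemma asserts that an MRS$_{\Z_{27}}(3,3;3)$ exists, yet $\Z_{27}$ is indecomposable and has no summand of order $9$. This is precisely why Construction \ref{con:odd} is phrased in terms of the subgroup $\langle t\rangle$ of a cyclic factor $\Z_{2k+1}$ and multiplies the number of arrays by $t$ (distributing cosets across new arrays), rather than splitting off a complement; any correct proof has to work with subgroups and cosets, not with direct summands of order $ab$.

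Even where the decomposition does exist, the strategy of basing everything on an MR$_H(a,b)$ cannot work, because the existence condition for an MRS$_G(a,b;c)$ is strictly weaker than the existence of a magic rectangle on a subgroup of order $ab$. Take $a=3$, $b=6$, $c=2$ and $G=(\Z_3)^2\oplus(\Z_2)^2$: the Sylow $2$-subgroup of $G$ is noncyclic and $\{3,6\}\neq\{2l+1,2^n\}$, so the lemma guarantees an MRS$_G(3,6;2)$; but every subgroup $H$ of order $18$ has Sylow $2$-subgroup $\Z_2$, which is cyclic and nontrivial, and $3\not\equiv 6\pmod 2$, so Theorem \ref{Thm:MR-G} says no MR$_H(3,6)$ exists for any admissible $H$. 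In such cases the $c$ arrays must jointly absorb the parity obstruction, which your layered construction $A_k=M+f_k$ cannot do since each $A_k$ projects onto the same fixed magic rectangle $M$ over $H$. Finally, the label family $\{f_k\}$ satisfying your conditions (i) and (ii) --- the per-cell bijectivity together with row and column sums independent of $k$ --- is itself the combinatorial heart of the matter and is only asserted, not constructed; and your claim that the hypothesis $\{a,b\}\neq\{2l+1,2^n\}$ ``enters precisely'' there is not substantiated. The proposal therefore does not constitute a proof.
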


This paper is devoted to examining all of the remaining cases not covered by Lemma \ref{thm:CH1}. We prove the following result.

\begin{Theorem}\label{thm:main}
Let $G$ be an abelian group of order $abc$ with $a,b>1$. An MRS$_G(a,b;c)$ exists if and only if either $a\equiv b\equiv 0 \pmod{2}$, or the Sylow $2$-subgroup of $G$ is trivial or noncyclic, and when $2\in\{a,b\}$, $ab\equiv 0\pmod{4}$.
\end{Theorem}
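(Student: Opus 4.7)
The plan is to combine the necessity from Lemma \ref{lem:necessary} with a constructive sufficiency argument addressing the cases left open by Lemma \ref{thm:CH1}. By that lemma, the only remaining instances are those with $\{a,b\} = \{2l+1, 2^n\}$; after swapping if necessary, take $a=2^n$ and $b=2l+1$. The necessary conditions in Lemma \ref{lem:necessary} then force the Sylow $2$-subgroup $S_2$ of $G$ to be noncyclic (it is nontrivial because $2\mid|G|$) and force $n \geq 2$ (if $n=1$ then $2 \in \{a,b\}$, so $4$ must divide $ab = 2(2l+1)$, which is impossible). Thus the task is to construct an MRS$_G(2^n,2l+1;c)$ for every abelian group $G$ of order $2^n(2l+1)c$ with $n\geq 2$ and $S_2$ noncyclic.

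Write $G = S_2 \oplus H$ with $H$ of odd order, and decompose $c = 2^k c_o$ with $c_o$ odd, so that $|S_2| = 2^{n+k}$ and $|H| = (2l+1)c_o$. The base case $c=1$ is an MR$_G(2^n,2l+1)$, whose existence is exactly Theorem \ref{Thm:MR-G}. For $c>1$, I would use a coset-tiling construction. Pick $G_0 = S_0 \oplus H_0 \leq G$, where $S_0 \leq S_2$ is a noncyclic subgroup of order $2^n$ (which exists because every noncyclic abelian $2$-group contains a noncyclic subgroup of every order that is a multiple of $4$, and $2^n$ is such an order since $n\geq 2$) and $H_0 \leq H$ has order $2l+1$. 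Then $|G:G_0|=c$, and my plan is to place a carefully chosen MR$_{G_0}(2^n, 2l+1)$ on each coset of $G_0$ so that, after translating by a coset representative, all $c$ arrays share common row and column sums in $G$.

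The crux is that translating an array on $G_0$ by a coset representative $g$ shifts the row sum by $bg=(2l+1)g$ and the column sum by $ag=2^n g$. Since $\gcd(a,b)=1$, one cannot use the same MR$_{G_0}$ on every coset and absorb these shifts into a single global row and column sum. I would therefore establish a strengthened form of Theorem \ref{Thm:MR-G}: for a noncyclic abelian group $G_0$ of order $2^n(2l+1)$ with $n\geq 2$, an MR$_{G_0}(2^n, 2l+1)$ exists with any prescribed admissible pair of row and column sums $(\gamma, \delta)$. Given this flexibility, on each coset $G_0 + h_r$ I would select the MR$_{G_0}$ whose row and column sums are $\gamma - bh_r$ and $\delta - ah_r$ respectively, so that after translation all arrays have common row sum $\gamma$ and column sum $\delta$ in $G$.

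The main obstacle will be proving this prescribed-sum strengthening of Theorem \ref{Thm:MR-G}. I would tackle it by revisiting the centrally symmetric constructions of Bier--Kleinschmidt and Hagedorn and by a case analysis on the invariant factor decomposition of $S_0$ (separating the elementary $\mathbb{Z}_2^n$ case, the $\mathbb{Z}_2\oplus \mathbb{Z}_{2^{n-1}}$ case, and the higher-rank cases), reducing everything to explicit small building blocks of dimension $4 \times (2l+1)$ over elementary abelian $2$-groups whose row and column sums can be tuned by modifying a few entries. Once this strengthened theorem is in hand, the MRS is assembled as the disjoint union $G = \bigsqcup_r (G_0 + h_r)$ of the shifted magic rectangles; bijectivity onto $G$ follows from the coset partition, and the uniform row and column sums follow by construction.
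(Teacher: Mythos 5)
Your reduction to the case $a=2^n$, $b=2l+1$, $n\geq 2$, with $S_2$ noncyclic is correct, and the existence of a noncyclic $S_0\leq S_2$ of order $2^n$ is also fine. However, the coset-tiling step contains a fatal obstruction that your proposed ``prescribed-sums'' strengthening of Theorem~\ref{Thm:MR-G} cannot repair. If the $r$-th array has all its entries in the coset $G_0+h_r$, then each of its row sums lies in the coset $bh_r+G_0$ and each of its column sums lies in $ah_r+G_0$ (a row sum is $b$ elements of $G_0$ plus $bh_r$, and a row sum of an MR over $G_0$ is necessarily an element of $G_0$). For all $c$ arrays to share a single row sum $\gamma\in G$ and a single column sum $\delta\in G$, you would need $b(h_r-h_{r'})\in G_0$ and $a(h_r-h_{r'})\in G_0$ for all $r,r'$, i.e.\ $aG\subseteq G_0$ and $bG\subseteq G_0$; since $\gcd(a,b)=1$ gives $G=aG+bG$, this forces $G\subseteq G_0$, which is impossible for $c>1$. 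Equivalently, the targets $\gamma-bh_r$ and $\delta-ah_r$ you want to prescribe are not elements of $G_0$ at all, so no MR$_{G_0}(2^n,2l+1)$ can realize them. (Incidentally, the prescribed-sum version of Theorem~\ref{Thm:MR-G} \emph{within} $G_0$ is essentially free: adding a constant $g_0$ to every entry of a single MR$_{G_0}$ changes the sums by $(bg_0,ag_0)$, and since $abg_0=0$ this sweeps out all admissible pairs; the problem is not the flexibility of the sums inside $G_0$ but that your required sums live outside $G_0$.)

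This is precisely why the paper does not tile by cosets of one subgroup with one array per coset. Instead it normalizes everything to zero row and column sums, reduces via Lemma~\ref{lem:odd} to MRS$^*_{\Z_p\oplus S_2}(p,4;|S_2|/4)$ for a single odd prime $p$, and then builds these by recursion on the $2$-part using constructions (Constructions~\ref{con:2,2}, \ref{con:Hole}, \ref{con:Hole-1}, \ref{con:C[p]}) in which the arrays covering $G\setminus H$ for a subgroup $H$ deliberately mix elements from several cosets of $H$ within each row and column so that the coset contributions cancel. To salvage your plan you would need, at minimum, to abandon the ``one array per coset'' structure and allow each array to draw entries from a full transversal-like pattern of cosets -- at which point you are essentially forced into constructions of the paper's type, together with explicit base arrays for the small groups, rather than a direct appeal to Theorem~\ref{Thm:MR-G}.
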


In Section \ref{sec:Problem Reduction}, we reduce the existence problem of an MRS$_G(a,b;c)$ over a finite abelian group $G$ to that of an MRS$_{\Z_p\oplus S_2}(p,4,|S_2|/4)$, where $p$ is an odd prime and $S_2$ is an abelian and noncyclic 2-group. The {\em exponent} of a finite group $G$, denoted by $\exp(G)$, is the least common multiple of the orders of all elements of $G$. In Section \ref{sec:exp4}, we establish the existence of an MRS$_{\Z_p\oplus S_2}(p,4,|S_2|/4)$ for the case when the exponent of $S_2$ is no more than 4. In Section \ref{sec:exp8}, we establish the existence of an MRS$_{\Z_p\oplus S_2}(p,4,|S_2|/4)$ for the case when the exponent of $S_2$ is larger than 4. Section \ref{sec:con} concludes this paper.

%
%

\section{Problem reduction}\label{sec:Problem Reduction}

By Lemmas \ref{lem:necessary} and \ref{thm:CH1}, in order to prove Theorem \ref{thm:main}, it remains to examine the existence of an MRS$_G(2l+1,2^n;c)$, where $l\geq 1$ and $n\geq 2$. In this section, we reduce the existence problem of an MRS$_G(2l+1,2^n;c)$ to that of an MRS$_{\Z_p\oplus S_2}(p,4,|S_2|/4)$, where $p$ is a prime factor of $2l+1$ and $S_2$ is the Sylow $2$-subgroup of $G$.

The following two constructions play an important role in our reduction of the problem.

\begin{Construction}\label{con:directsum}{\rm \cite[Lemma 3.3]{CH21-1}}
Let $G_1$ be an abelian group of order $abc_1$. Let $G_2$ be an abelian group of order $c_2$, whose Sylow $2$-subgroup is either trivial or noncyclic. If there exists an MRS$_{G_1}(a,b;c_1)$, then there exists an MRS$_{G_1\oplus G_2}(a,b;c_1c_2)$.
\end{Construction}

\begin{Construction}\label{con:odd}{\rm \cite[Lemma 3.4]{CH21-1}}
Let $H$ be a finite abelian group. Let $k$ and $t$ be positive integers such that $t$ is a divisor of $2k+1$. Let $\langle t\rangle$ be the subgroup of order $(2k+1)/t$ in $\Z_{2k+1}$. If there exists an MRS$_{\langle t\rangle\oplus H}(a,b;c)$, then there exists an MRS$_{\Z_{2k+1}\oplus H}(a,b;ct)$.
\end{Construction}

\begin{Lemma}\label{lem:odd}
Let $a\geq 3$ be an odd integer and $n\geq 2$ be an integer. Let $G$ be an abelian group of order $2^nac$, whose Sylow $2$-subgroup $S_2$ is noncyclic. Let $p$ be a prime factor of $a$. If there exists an MRS$_{\Z_{p}\oplus S_2}(p,4;|S_2|/4)$, then there exists an MRS$_G(a,2^n;c)$.
\end{Lemma}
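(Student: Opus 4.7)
The plan is to transform the hypothesized MRS$_{\Z_p\oplus S_2}(p,4;2^{n-2})$ into MRS$_G(a,2^n;c)$ in four steps, using Constructions~\ref{con:odd} and~\ref{con:directsum} together with two elementary array-stacking operations. Write $G=S_2\oplus H$ with $|H|=ac$ odd, and let $e\geq 1$ be the largest exponent in the primary decomposition of the Sylow-$p$ subgroup of $H$; this choice ensures that $\Z_{p^e}$ appears as a direct summand of $H$, say $H=\Z_{p^e}\oplus H'$ with $|H'|=ac/p^e$ odd.

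First, I invoke Construction~\ref{con:odd} with $2k+1=p^e$ and $t=p^{e-1}$ (so that $\langle t\rangle$ is the order-$p$ subgroup of $\Z_{p^e}$) and with the ``$H$'' of that construction taken to be $S_2$; this produces MRS$_{\Z_{p^e}\oplus S_2}(p,4;2^{n-2}p^{e-1})$. Second, I partition these $2^{n-2}p^{e-1}$ arrays into $p^{e-1}$ groups of $2^{n-2}$ arrays and place each group side by side horizontally; each concatenated $p\times 2^n$ array has every row sum equal to $2^{n-2}$ times the common row sum of the small arrays (a constant), every column sum equal to the common column sum (a constant), and distinct entries by the MRS property, yielding MRS$_{\Z_{p^e}\oplus S_2}(p,2^n;p^{e-1})$. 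Third, I apply Construction~\ref{con:directsum} with $G_1=\Z_{p^e}\oplus S_2$ and $G_2=H'$; since $|H'|$ is odd, the Sylow-$2$ subgroup of $G_2$ is trivial, and $G_1\oplus G_2\cong G$, so one obtains MRS$_G(p,2^n;p^{e-1}\cdot ac/p^e)=$MRS$_G(p,2^n;ac/p)$. Fourth, I partition these $ac/p$ arrays into $c$ groups of $a/p$ arrays and stack each group vertically; each stacked $a\times 2^n$ array has constant row sum (every row is a row of one of the component arrays) and constant column sum (the sum of $a/p$ component column sums), and entries remain distinct. The result is the desired MRS$_G(a,2^n;c)$.

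I expect the main technical obstacle to be the careful choice of $e$ at the outset. Since Constructions~\ref{con:odd} and~\ref{con:directsum} preserve the array dimensions, bridging the dimension gap from $(p,4)$ to $(a,2^n)$ has to be accomplished entirely through the two stacking operations, which explains why the hypothesis provides precisely $|S_2|/4=2^{n-2}$ small arrays (the exact number needed to fill out the column count $2^n$). At the same time, Construction~\ref{con:directsum} in step three only applies when the subgroup upgraded to in step one sits in $G$ as a direct summand, which forces $e$ to be one of the exponents appearing in the primary decomposition of the Sylow-$p$ subgroup of $H$ rather than an arbitrary positive integer; taking $e$ to be the maximum such exponent is the simplest guaranteed-to-work choice. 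All other verifications---divisibility of $ac/p$ by $a/p$ so the step-four partition is possible, and the constancy of row and column sums after each stacking---are routine.
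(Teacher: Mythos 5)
Your approach is essentially the paper's: the same two ingredients (Constructions \ref{con:odd} and \ref{con:directsum}) combined with horizontal concatenation of $2^{n-2}$ arrays and vertical stacking of $a/p$ arrays, merely performed in a different order. However, there is one genuine flaw: the opening claim that $G=S_2\oplus H$ with $|H|=ac$ odd silently assumes $|S_2|=2^n$, i.e.\ that $c$ is odd. The lemma imposes no such restriction: $S_2$ is the full Sylow $2$-subgroup of a group of order $2^nac$, so $|S_2|>2^n$ whenever $c$ is even, and the lemma is indeed invoked for arbitrary $c$ in the reduction of Section \ref{sec:Problem Reduction}. When $c$ is even your bookkeeping breaks down at every step: the hypothesis supplies $|S_2|/4$ arrays, not $2^{n-2}$; after step one you have $p^{e-1}|S_2|/4$ arrays rather than $2^{n-2}p^{e-1}$; step two would produce $p^{e-1}|S_2|/2^n$ arrays of size $p\times 2^n$ rather than $p^{e-1}$; and the group $\Z_{p^e}\oplus S_2\oplus H'$ with $|H'|=ac/p^e$ has order $|S_2|\,ac\neq|G|$.

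The repair is routine and does not change the structure of your argument: let $H$ be the odd-order complement of $S_2$ in $G$, so $|H|=2^nac/|S_2|$, write $|S_2|=2^m$ with $m\geq n$, and carry $2^m$ through the computation. The divisibility needed for the horizontal concatenation, namely $2^{n-2}\mid p^{e-1}\cdot 2^{m-2}$, holds because $m\geq n$; Construction \ref{con:directsum} then yields $|G|/(p\cdot 2^n)=ac/p$ arrays of size $p\times 2^n$, which is still divisible by $a/p$, so the vertical stacking produces the required $c$ arrays of size $a\times 2^n$. With that correction your proof coincides with the paper's.
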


\begin{proof}
For $n\geq 2$, if there exists an MRS$_G(a,4;2^{n-2}c)$, which consists of $2^{n-2}c$ arrays, each of size $a\times 4$, then we can select any $2^{n-2}$ of these arrays to form an array of size $a\times 2^n$. Consequently, by considering $c$ distinct ways to choose such a subset of arrays, we obtain an MRS$_G(a,2^n;c)$. Therefore, in order to establish the existence of an MRS$_G(a,2^n;c)$, it suffices to examine the existence of an MRS$_G(a,4;2^{n-2}c)$.

Furthermore, since $p$ is a prime factor of $a$, if there exists an MRS$_{G}(p,4;2^{n-2}ac/p)$, which consists of $2^{n-2}ac/p$ arrays, each of size $p\times 4$, then we can select any $a/p$ of these arrays to form an array of size $a\times 4$. Consequently, by considering $2^{n-2} c$ distinct ways to choose such a subset of arrays, we obtain an MRS$_G(a,4;2^{n-2} c)$. Therefore, establishing the existence of an MRS$_{G}(p,4;2^{n-2}ac/p)$ is sufficient to derive the existence of an MRS$_G(a,4;2^{n-2}c)$.

Note that $G\cong \Z_{p_1^{\alpha_1}}\oplus \cdots\oplus \Z_{p_r^{\alpha_r}}\oplus S_2$, where $r\geq 1$, $p_1,\ldots,p_r$ are odd primes and $\alpha_1,\ldots,\alpha_r$ are positive integers. Without loss of generality, assume that $p_1=p$ and $\alpha_1=\alpha$. Applying Construction \ref{con:odd} with an MRS$_{\Z_{p}\oplus S_2}(p,4;|S_2|/4)$, which exists by assumption, we obtain an MRS$_{\Z_{p^{\alpha}}\oplus S_2}(p,4;p^{\alpha-1}|S_2|/4)$. Then apply Construction \ref{con:directsum} to obtain an MRS$_{G}(p,4;|G|/4p)$, i.e., an MRS$_{G}(p,4;2^{n-2}ac/p)$, as desired.
\end{proof}

By Lemmas \ref{lem:necessary}, \ref{thm:CH1} and \ref{lem:odd}, to prove Theorem \ref{thm:main}, it suffices to establish the existence of an MRS$_{\Z_{p}\oplus S_2}(p,4;|S_2|/4)$ for any odd prime $p$ and any abelian and noncyclic $2$-group $S_2$.

In what follows we always assume that $S_2$ is an abelian and noncyclic $2$-group.


\section{The case of $\exp(S_2)\leq 4$}\label{sec:exp4}

Recall that the {\em exponent} of a finite group $G$, denoted by $\exp(G)$, is the least common multiple of the orders of all elements of $G$. For a nonnegative integer $\alpha$ and a finite group $G$, denote by $G^{\alpha}$ the direct product of $\alpha$ copies of $G$, where $G^0$ is understood to be the trivial group. 

Since $S_2$ is an abelian and noncyclic $2$-group, if $\exp(S_2)\leq 4$, then $S_2\cong (\Z_4)^\alpha\oplus (\Z_2)^\beta$ for some integers $\alpha,\beta\geq 0$ and $\alpha+\beta\geq 2$. In this section, we establish the existence of an MRS$_{\Z_{p}\oplus (\Z_4)^\alpha\oplus (\Z_2)^\beta}(p,4;2^{2\alpha+\beta-2})$, where $p$ is an odd prime.

We use the notation MRS$_G^*(a,b;c)$ to represent an MRS$_G(a,b;c)$ where the sum of each row and the sum of each column in every $a\times b$ array are both zero.

\begin{Lemma}\label{lem:p,2,2}
Let $p$ be an odd prime and $G\cong \Z_p\oplus \Z_2\oplus \Z_2$. Then there exists an MRS$_G^*(p,4;1)$.
\end{Lemma}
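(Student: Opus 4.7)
The plan is to obtain an MR$_G(p,4)$ from Theorem~\ref{Thm:MR-G} and then translate all entries by a single constant of $G$ so as to force the common row sum and the common column sum to zero. Since $G\cong\Z_p\oplus\Z_2\oplus\Z_2$ has noncyclic Sylow $2$-subgroup $\Z_2\oplus\Z_2$, Theorem~\ref{Thm:MR-G} immediately supplies an MR$_G(p,4)$; write $\gamma$ and $\delta$ for its common row sum and common column sum, respectively.

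The first step is to pin down the structural form of $\gamma$ and $\delta$. Summing all $4p$ entries in two ways gives $p\gamma=4\delta=\sum_{g\in G}g$, and this sum vanishes because pairs $\{g,-g\}$ with $g\neq -g$ cancel while the four $2$-torsion elements of $G$ (the Sylow $2$-subgroup $\Z_2\oplus\Z_2$) also sum to zero. Decomposing $\gamma=(\gamma_1,\gamma_2)$ and $\delta=(\delta_1,\delta_2)$ along $G=\Z_p\oplus(\Z_2\oplus\Z_2)$, the relation $p\gamma=0$ together with the fact that $\Z_2\oplus\Z_2$ has exponent~$2$ and $p$ is odd forces $\gamma_2=0$; similarly $4\delta=0$ together with $\gcd(4,p)=1$ forces $\delta_1=0$.

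Next I exhibit $c=(c_1,c_2)\in G$ satisfying $4c=-\gamma$ and $pc=-\delta$ simultaneously. In the $\Z_p$-coordinate the first equation reads $4c_1=-\gamma_1$, uniquely solved by $c_1=-4^{-1}\gamma_1$, while the second is automatic because $\delta_1=0$. In the $\Z_2\oplus\Z_2$-coordinate the first equation is automatic since $\gamma_2=0=4c_2$, and the second, because multiplication by the odd integer $p$ is the identity on $\Z_2\oplus\Z_2$, reduces to $c_2=\delta_2$. Hence $c$ exists (and is in fact unique). Translating every entry of the MR$_G(p,4)$ by $c$ preserves the property that each element of $G$ appears exactly once, since $x\mapsto x+c$ is a bijection on $G$, and replaces the row sum by $\gamma+4c=0$ and the column sum by $\delta+pc=0$. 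The resulting $p\times 4$ array is the required MRS$_G^*(p,4;1)$.

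The argument is essentially bookkeeping once the structural constraints on $\gamma$ and $\delta$ have been identified; the only point that needs checking is the simultaneous solvability of $4c=-\gamma$ and $pc=-\delta$, which rests on the coprimality of $4$ and $p$ together with the direct-sum splitting $G=\Z_p\oplus(\Z_2\oplus\Z_2)$. I do not anticipate any substantive obstacle.
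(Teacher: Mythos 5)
Your proof is correct, but it takes a genuinely different route from the paper's. The paper proves this lemma by exhibiting an explicit $3\times 4$ array for $p=3$ and, for $p\geq 5$, importing the construction from the proof of \cite[Lemma 3.7]{CH21-1}; you instead black-box Theorem~\ref{Thm:MR-G} to obtain some MR$_G(p,4)$ and then normalize it by a translation. The normalization is sound: $p\gamma=4\delta=\sum_{g\in G}g=0$ (the three involutions of $\Z_2\oplus\Z_2$ sum to zero), so along the splitting $G=\Z_p\oplus(\Z_2\oplus\Z_2)$ the row-sum defect $\gamma$ lives entirely in the $\Z_p$-summand and the column-sum defect $\delta$ entirely in the $2$-summand; since $\gcd(4,p)=1$, the equations $4c=-\gamma$ and $pc=-\delta$ decouple across the two summands and admit a (unique) common solution $c$, and adding $c$ to every entry zeroes both sums while preserving the one-to-one use of the elements of $G$. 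Your argument is shorter, uniform in $p$ (no separate $p=3$ case), and makes transparent why the zero-sum normalization is always achievable when the two side lengths are coprime; what the paper's explicit constructions buy is independence from Theorem~\ref{Thm:MR-G}, though that theorem is a previously published result the paper already cites, so there is no circularity in your use of it.
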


\begin{proof}
It is readily checked that
$$
{\left(
    \begin{array}{cccc}
(1,0,0)  & (0,1,0)    & (1,0,1)  & (1,1,1) \\
(0,1,1)  & (2,0,0)    & (2,0,1)  & (2,1,0) \\
(2,1,1)  & (1,1,0)    & (0,0,0)  & (0,0,1) \\
    \end{array}
    \right)}
$$
is an MRS$_{\Z_3\oplus \Z_2\oplus \Z_2}^*(3,4;1)$. For $p\geq 5$, it follows from the proof of \cite[Lemma 3.7]{CH21-1} that there exists an MRS$_{\Z_p\oplus \Z_2\oplus \Z_2}^*(p,4;1)$.
\end{proof}

\begin{Lemma}\label{lem:p,2,2,2}\emph{\cite[Lemma 2.1]{CH21-2}}
Let $p$ be an odd prime and $G\cong \Z_p\oplus (\Z_2)^3$. Then there exists an MRS$_G^*(p,4;2)$.
\end{Lemma}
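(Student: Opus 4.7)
The plan is to lift an MRS$^*_{\Z_p\oplus(\Z_2)^2}(p,4;1)$ supplied by Lemma~\ref{lem:p,2,2} to an MRS$^*_G(p,4;2)$ by adjoining a third $\Z_2$ coordinate to each entry. Write $G=H\oplus\Z_2$ with $H=\Z_p\oplus(\Z_2)^2$, and let $M$ denote an MRS$^*_H(p,4;1)$.

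First, I would try the naive lift $M_0[i,j]=(M[i,j],0)$ and $M_1[i,j]=(M[i,j],1)$. Every element of $G$ appears exactly once, the $H$-parts of all row- and column-sums vanish, and the last-coordinate row sums are $0$ and $4\equiv 0\pmod 2$. But the last-coordinate column sum of $M_1$ equals $p\equiv 1\pmod 2$, so the naive lift fails. This failure is structural: as long as $M_0$ and $M_1$ share the same $H$-projection, any modification of the last coordinate at a position $(i,j)$ forces a compensating swap of the bits of $M_0$ and $M_1$ at $(i,j)$, which flips the column-$j$ last-coordinate parity in both arrays simultaneously; hence one cannot keep $M_0$'s columns even while turning $M_1$'s columns even.

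To escape this parity clash, I would use two distinct MRS$^*_H(p,4;1)$'s $M$ and $M'$. Let $\phi\colon[p]\times[4]\to[p]\times[4]$ be the bijection sending the position of $h\in H$ in $M$ to the position of $h$ in $M'$, and set $M_0[i,j]=(M[i,j],\varepsilon_{ij})$ and $M_1[\phi(i,j)]=(M'[\phi(i,j)],1-\varepsilon_{ij})$ for some $\varepsilon\in\{0,1\}^{p\times 4}$. Distinctness in $G$ is automatic, and the magic conditions translate into a linear system over $\mathbb{F}_2$ on $\varepsilon$: its row and column sums must be even on the $M_0$-grid, while their $\phi$-pullbacks must be even on the rows and odd on the columns of the $M_1$-grid (the odd requirement arising from $p\equiv 1\pmod 2$). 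A short $\mathbb{F}_2$-dimension count shows this system has a solution whenever $\phi$ mixes the column partition, i.e.\ no column of $M$ is sent wholesale to a column of $M'$. For $p\geq 5$ I would realize such an $M'$ by applying to $M$ a row-dependent permutation of columns together with a compensating group shift, arranged so that $M'$ remains an MRS$^*_H(p,4;1)$ and the induced $\phi$ is column-mixing. For the boundary case $p=3$, I would settle the claim by an explicit pair of $3\times 4$ arrays in the spirit of the example in Lemma~\ref{lem:p,2,2}.

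The step I expect to be the main obstacle is the joint design of $M'$ and $\phi$: the map $\phi$ must mix columns to break the parity clash, yet $M'$ must itself be a valid MRS$^*_H(p,4;1)$, whose column sums after a row-dependent column rearrangement are certain ``diagonal'' sums of $M$ that need not vanish for an arbitrary starting $M$. Controlling those diagonals requires prescribing a structured $M$, adapted from the construction underlying \cite[Lemma 3.7]{CH21-1}, and it is precisely this technicality that causes \cite[Lemma 2.1]{CH21-2} to be handled by a self-contained direct construction rather than a clean general reduction.
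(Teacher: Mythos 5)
The paper does not actually prove this lemma---it imports it verbatim from \cite[Lemma 2.1]{CH21-2}---so there is no internal argument to compare against; your proposal must therefore stand on its own, and it does not yet. Your analysis of why the naive lift fails is correct and worth keeping: if the two arrays have the same $H$-projection, the last-coordinate column sums of $M_0$ and $M_1$ over any fixed column add up to $p\equiv 1\pmod 2$, so they cannot both vanish. But the two steps that would turn this observation into a proof are both missing. First, the solvability criterion you assert for the $\mathbb{F}_2$-system on $\varepsilon$ is not just unproved but incorrect as stated: ``$\phi$ sends no column of $M$ wholesale to a column of $M'$'' rules out only one family of obstructing linear dependencies. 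The system is inconsistent precisely when some $\mathbb{F}_2$-dependency among the $2p+8$ constraints involves an odd number of $M_1$-column constraints, and such dependencies need not come from whole columns. For example, if $\phi$ carries the symmetric difference of one row and one column of the $M$-grid (a set of $p+2$ cells) onto the symmetric difference of one row and one column of the $M'$-grid, then $R_{i_0}+C_{j_0}+R'_{i'}+C'_{j'}=0$ is a dependency containing exactly one $M_1$-column constraint, whose right-hand sides sum to $1$, and the system has no solution even though $\phi$ is ``column-mixing'' in your sense. So the condition you would need to impose on $\phi$ is strictly stronger, and you never identify it.

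Second, even granting a corrected criterion, the existence of a pair $M,M'$ of MRS$^*_{\Z_p\oplus(\Z_2)^2}(p,4;1)$'s whose element-matching bijection $\phi$ satisfies it is exactly the content of the lemma's difficulty, and you explicitly defer it: a row-dependent column permutation of $M$ generally destroys the column sums (they become diagonal sums that need not vanish), and ``compensating group shifts'' that fix columns without breaking rows or bijectivity are not exhibited. The $p=3$ base case is likewise left to an unspecified explicit example. As written, the argument shows that a certain reduction \emph{could} work if a suitably structured $M$ and $M'$ exist, but it does not produce them; this is a genuine gap, and it is presumably why \cite[Lemma 2.1]{CH21-2} (and, in the same spirit, Lemmas \ref{lem:p,2,2} and \ref{lem:p,2,4} of this paper) proceed by direct explicit constructions of the first few rows plus a generic $\{x,-x\}$ completion rather than by a lifting argument.
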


\begin{Lemma}\label{lem:p,2,4}
Let $p$ be an odd prime and $G\cong \Z_p\oplus \Z_2\oplus \Z_4$. Then there exists an MRS$_G^*(p,4;2)$.
\end{Lemma}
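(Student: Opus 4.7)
The natural starting point is Lemma~\ref{lem:p,2,2}, which provides an MRS$_{\Z_p \oplus \Z_2 \oplus \Z_2}^*(p,4;1)$; call the resulting $p \times 4$ array $M^{\circ}$. Since $\Z_2 \oplus \Z_4$ contains the index-$2$ subgroup $\Z_2 \oplus 2\Z_4 \cong \Z_2 \oplus \Z_2$, one can lift $M^{\circ}$ to an array $M_1$ with entries in $\Z_p \oplus \Z_2 \oplus 2\Z_4$ by sending a third coordinate $c \in \Z_2$ to $2c \in \{0,2\} \subset \Z_4$; all row and column sums remain zero in $\Z_p \oplus \Z_2 \oplus \Z_4$. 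The obstruction is filling the second array with the remaining $4p$ elements of $\Z_p \oplus \Z_2 \oplus (1+2\Z_4)$: every such entry has odd third coordinate, so each column sum would have third coordinate equal to a sum of $p$ odd elements of $\Z_4$, which is odd and hence nonzero in $\Z_4$. So the coset partition fails, and both arrays must mix elements with even and odd third coordinates.

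My plan is to redesign both arrays so that each column contains an even number of entries with odd third coordinate, making each column third-coordinate sum even; with further care it can be made divisible by $4$. Concretely I would first prescribe a template of $\Z_2 \oplus \Z_4$-coordinates for the $8p$ cells such that (i) each of the $8$ elements of $\Z_2 \oplus \Z_4$ appears exactly $p$ times across the two arrays, (ii) each row sums to $(0,0)$ in $\Z_2 \oplus \Z_4$, and (iii) each column sums to $(0,0)$ in $\Z_2 \oplus \Z_4$. Then I would fill in the $\Z_p$-coordinates: for each $(a,b) \in \Z_2 \oplus \Z_4$ the $p$ cells with second-third coordinate $(a,b)$ receive the elements of $\Z_p$ according to a permutation chosen so that all row and column sums vanish modulo $p$; this is routine by a standard Latin-square argument because $\gcd(p,4) = 1$.

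For the small prime $p = 3$ the column-sum constraint $\sum_{i=1}^{3} b_i \equiv 0 \pmod 4$ with $b_i \in \Z_4$ admits only a short list of multisets, so I would exhibit two explicit $3 \times 4$ arrays covering all $24$ elements of $\Z_3 \oplus \Z_2 \oplus \Z_4$ and verify the twelve sum conditions directly. For $p \geq 5$ there is more room and I would give a uniform construction, likely a centrally symmetric arrangement of the elements of $G$ of order larger than $2$, placing the four self-inverse elements $(0,0,0)$, $(0,1,0)$, $(0,0,2)$, $(0,1,2)$ by hand since they are equal to their own negatives. The hard part is the template design: the row-sum, column-sum, and multiset-count constraints interact tightly, and for $p = 3$ the system is close to overdetermined, which is why the small-prime case requires an explicit example rather than a uniform formula.
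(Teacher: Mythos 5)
Your opening observation is correct and worth making: the coset $\Z_p\oplus\Z_2\oplus(1+2\Z_4)$ cannot fill the second array on its own because $p$ odd third coordinates sum to an odd element of $\Z_4$, so no construction can simply lift Lemma~\ref{lem:p,2,2} along the subgroup $\Z_2\oplus 2\Z_4$. But beyond that diagnosis, what you have written is a plan rather than a proof: the template of $\Z_2\oplus\Z_4$-labels, the explicit $p=3$ arrays, and the centrally symmetric arrangement for $p\geq 5$ are all announced (``I would exhibit\dots'', ``likely a centrally symmetric arrangement'') but never produced. The one step you declare ``routine by a standard Latin-square argument'' --- assigning the $\Z_p$-coordinates to the $p$ cells carrying each fixed label so that all row and column sums vanish mod $p$ --- is in fact where the difficulty sits. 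Those $p$ cells are scattered across both arrays according to the (unspecified) template; a row imposes a constraint on only $4$ cells whose labels may repeat, and a column imposes a constraint on $p$ cells with an arbitrary multiset of labels. Whether a consistent assignment exists depends entirely on the combinatorial structure of the template, and $\gcd(p,4)=1$ alone does not settle it. Until a concrete template is written down and the filling is verified, the lemma is not proved.

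For comparison, the paper resolves exactly this tension by working in $\Z_{2p}\oplus\Z_4\cong\Z_p\oplus\Z_2\oplus\Z_4$ and splitting the construction into an explicit finite core plus a uniform gadget: for $p=3$ it quotes \cite[Lemma 2.2]{CH21-2} outright, and for $p\geq 5$ it takes the two $5\times 4$ base blocks from that same source (which cover $I\times\Z_4$ for a fixed $10$-element set $I\subseteq\Z_{2p}$ with all sums zero) and completes each array with two-row blocks of the form $\bigl((x,0),(-x,2),(x,1),(-x,1)\bigr)$, $\bigl((-x,0),(x,2),(-x,3),(x,3)\bigr)$, one for each inverse pair $\{x,-x\}$ in $\Z_{2p}\setminus I$. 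That gadget is the concrete realization of your ``centrally symmetric'' idea, and the explicit base blocks are what replace your undelivered template; supplying analogues of both is the actual content of the proof.
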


\begin{proof}
By \cite[Lemma 2.2]{CH21-2}, there exists an MRS$_{\Z_3\oplus \Z_2\oplus \Z_4}^*(3,4;2)$. Note that $\Z_p\oplus \Z_2\oplus \Z_4\cong \Z_{2p}\oplus \Z_4$. For $p\geq5$, an MRS$_{\Z_{2p}\oplus \Z_4}(p,4;2)$  with row sum $(0,0)$ and column sum $(0,(p-5)/2)$ was constructed in \cite[Lemma 2.2]{CH21-2}. We can make a minor modification to this construction such that all row sums and all column sums are $(0,0)$.

The first 5 rows in the two $p\times 4$ arrays of an MRS$_{\Z_{2p}\oplus \Z_4}(p,4;2)$ are listed below:
$$
A_1={\left(
    \begin{array}{cccc}
(1,0)  & (0,3)    & (p-1,0)  & (p,1) \\
(p-1,1)  & (p,3)    & (1,3)  & (0,1) \\
(p,2)  & (p+2,3)    & (0,2)  & (2p-2,1) \\
(2,3)  & (2p-2,3)    & (2p-1,1)  & (1,1) \\
(2p-2,2)  & (0,0)    & (p+1,2)  & (p+1,0) \\
    \end{array}
    \right)}
$$
and
$$
A_2={\left(
    \begin{array}{cccc}
(2,0)  & (p-2,0)    & (2p-1,3)  & (p+1,1) \\
(p-1,2)  & (p+2,0)    & (p-2,3)  & (p+1,3) \\
(1,2)  & (2p-1,2)    & (p+2,2)  & (p-2,2) \\
(p,0)  & (2p-1,0)    & (p-1,3)  & (2,1) \\
(2p-2,0)  & (2,2)    & (p+2,1)  & (p-2,1) \\
    \end{array}
    \right).}
$$
Note that $A_1$ and $A_2$ cover each element in the set $\{(i,j)\mid i\in I, j\in\Z_4\}$ exactly once, where $I=\{0,1,2,p-2,p-1,p,p+1,p+2,2p-2,2p-1\}$, and all row sums and all column sums in $A_1$ and $A_2$ are $(0,0)$. Since all the elements in $\Z_{2p}\setminus I$ can be partitioned into $p-5$ pairs, each of the form $\{x,-x\}$, we can create two rows in one $p\times 4$ array of an MRS$_{\Z_{2p}\oplus \Z_4}^*(p,4;2)$ as follows:
$$
{\left(
    \begin{array}{cccc}
(x,0)  & (-x,2)    & (x,1)  & (-x,1) \\
(-x,0)  & (x,2)    & (-x,3)  & (x,3) \\
    \end{array}
    \right).}
$$
This procedure provides us an MRS$_{\Z_{2p}\oplus \Z_4}^*(p,4;2)$.
\end{proof}

\begin{Construction}\label{con:2,2}
Let $G_1\cong \Z_p\oplus H\oplus \Z_{2^{\alpha-1}}\oplus \Z_{2^{\beta-1}}$ and $G_2\cong \Z_p\oplus H\oplus \Z_{2^{\alpha}}\oplus \Z_{2^{\beta}}$, where $\alpha,\beta\in\{1,2\}$, $p$ is an odd prime, and $H$ is a finite abelian group. If there exists an MRS$_{G_1}^*(p,4;|G_1|/(4p))$, then there exists an MRS$_{G_2}^*(p,4;|G_2|/(4p))$.
\end{Construction}

\begin{proof}
Let $G_1=\{(w,u,v) \mid w\in\Z_p\oplus H, u\in\Z_{2^{\alpha-1}}, v\in\Z_{2^{\beta-1}}\}$ and $G_2=\{(w,x,y) \mid w\in\Z_p\oplus H, x\in\Z_{2^{\alpha}}, y\in\Z_{2^{\beta}}\}$.
Let $M_1,M_2,\cdots,M_{|G_1|/(4p)}$ be an MRS$_{G_1}^*(p,4;|G_1|/(4p))$, where $M_s=(m^s_{ij})$, $1\leq s\leq|G_1|/(4p)$, $1\leq i\leq p$ and $1\leq j\leq 4$. Write $$m^s_{ij}=(w^s_{ij},u^s_{ij},v^s_{ij}),$$
where $w^s_{i,j}\in \Z_p\oplus H$, $u^s_{i,j}\in \Z_{2^{\alpha-1}}$ and $v^s_{i,j}\in \Z_{2^{\beta-1}}$.

For every $1\leq s\leq |G_1|/(4p)$, we construct four arrays of size $p\times 4$ defined on $G_2$, say $M_{s1}=(m^{s1}_{ij})$, $M_{s2}=(m^{s2}_{ij})$, $M_{s3}=(m^{s3}_{ij})$ and $M_{s4}=(m^{s4}_{ij})$, $1\leq i\leq p$ and $1\leq j\leq 4$, as follows: when $p\equiv 3 \pmod 4$, we take
$$m^{s1}_{ij}=
(w^s_{ij},2u^s_{ij},2v^s_{ij});$$
$$m^{s2}_{ij}=\left\{
\begin{array}{lll}
m^{s1}_{ij}+(0,0,1), &  \text{if } i=1; \\
m^{s1}_{ij}+(0,1,0), &  \text{if } i=2; \\
m^{s1}_{ij}+(0,-1,-1), &  \text{if } i=3; \\
m^{s1}_{ij}+(0,0,1), &  \text{otherwise};
\end{array}
\right.\ \ \ \
m^{s3}_{ij}=\left\{
\begin{array}{lll}
m^{s1}_{ij}+(0,1,0), &  \text{if } i=1; \\
m^{s1}_{ij}+(0,-1,-1), &  \text{if } i=2; \\
m^{s1}_{ij}+(0,0,1), &  \text{if } i=3; \\
m^{s1}_{ij}+(0,1,0), &  \text{otherwise};
\end{array}
\right.
$$
and
$$m^{s4}_{ij}=\left\{
\begin{array}{lll}
m^{s1}_{ij}+(0,-1,-1), &  \text{if } i=1; \\
m^{s1}_{ij}+(0,0,1), &  \text{if } i=2; \\
m^{s1}_{ij}+(0,1,0), &  \text{if } i=3; \\
m^{s1}_{ij}+(0,-1,-1), &  \text{otherwise};
\end{array}
\right.
$$
when $p\equiv 1 \pmod 4$, we take
$$m^{s1}_{ij}=\left\{
\begin{array}{lll}
(w^s_{ij},2u^s_{ij},2v^s_{ij})+(0,1,0), & \text{if } i=1; \\
(w^s_{ij},2u^s_{ij},2v^s_{ij})+(0,0,1), & \text{if } i=2; \\
(w^s_{ij},2u^s_{ij},2v^s_{ij})+(0,-1,-1), & \text{if } i=3; \\
(w^s_{ij},2u^s_{ij},2v^s_{ij}),  &   \text{otherwise};
\end{array}
\right.
$$
$$m^{s2}_{ij}=\left\{
\begin{array}{lll}
(w^s_{ij},2u^s_{ij},2v^s_{ij}), & \text{if } i=1; \\
(w^s_{ij},2u^s_{ij},2v^s_{ij})+(0,1,0), & \text{otherwise};
\end{array}
\right.
$$
$$m^{s3}_{ij}=\left\{
\begin{array}{lll}
(w^s_{ij},2u^s_{ij},2v^s_{ij}), & \text{if }  i=2; \\
(w^s_{ij},2u^s_{ij},2v^s_{ij})+(0,0,1), &   \text{otherwise};
\end{array}
\right.
$$
and
$$m^{s4}_{ij}=\left\{
\begin{array}{lll}
(w^s_{ij},2u^s_{ij},2v^s_{ij}), & \text{if } i=3; \\
(w^s_{ij},2u^s_{ij},2v^s_{ij})+(0,-1,-1), &   \text{otherwise}.
\end{array}
\right.
$$
Then the set ${\cal M}=\{M_{s1}, M_{s2}, M_{s3}, M_{s4}\mid 1\leq s\leq |G_1|/(4p)\}$ is an MRS$_{G_2}^*(p,4;|G_2|/(4p))$.

Note that every element $(w,x,y)$ of $G_2$ has a unique representation as $(w,x,y)=(w,2u,2v)+z$, where $(w,u,v)\in G_1$ and $z\in\{(0,0,0),(0,0,1),(0,1,0),(0,-1,-1)\}$.
Since every element of $G_1$ appears exactly once in $M_1,M_2,\cdots, M_{|G_1|/(4p)}$, it is readily checked that every element of $G_2$ appears exactly once in the arrays from $\cal M$. Since $|\Z_{2^{\alpha}}|$ and $|\Z_{2^{\beta}}|$ are both no more than $4$, it is readily checked that all the row sums and all the column sums in every array in $\cal M$ are zero.
\end{proof}

\begin{Lemma}\label{lem:exp4}
Let $G\cong \Z_p\oplus S_2$, where $p$ is an odd prime and $S_2\cong (\Z_4)^\alpha\oplus (\Z_2)^\beta$ for some integers $\alpha,\beta\geq 0$ and $\alpha+\beta\geq 2$. Then there exists an MRS$_G^*(p,4;|G|/(4p))$.
\end{Lemma}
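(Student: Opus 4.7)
The plan is to prove the lemma by induction on $|S_2|=2^{2\alpha+\beta}$. The base cases $|S_2|\in\{4,8\}$ correspond exactly to $(\alpha,\beta)\in\{(0,2),(0,3),(1,1)\}$ and are supplied by Lemmas \ref{lem:p,2,2}, \ref{lem:p,2,2,2}, and \ref{lem:p,2,4}. For the inductive step I would assume $|S_2|\geq 16$ and in each subcase realise $S_2$ as $H_0\oplus\Z_{2^{\alpha'}}\oplus\Z_{2^{\beta'}}$ for suitable $\alpha',\beta'\in\{1,2\}$ and some abelian $2$-group $H_0$ whose corresponding group $\Z_p\oplus H_0\oplus\Z_{2^{\alpha'-1}}\oplus\Z_{2^{\beta'-1}}$ is already known to admit an MRS$^*$; then Construction \ref{con:2,2} lifts this to the desired MRS$^*_{\Z_p\oplus S_2}(p,4;|S_2|/4)$.

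The natural reduction is \emph{halving}: for $\beta\geq 2$, take $H_0=(\Z_4)^\alpha\oplus(\Z_2)^{\beta-2}$ and apply Construction \ref{con:2,2} with $\alpha'=\beta'=1$, descending to the case $(\alpha,\beta-2)$. When $\alpha\geq 2$ and $\beta=0$, I would use the double-promotion variant $\alpha'=\beta'=2$ with $H_0=(\Z_4)^{\alpha-2}$, descending to $(\alpha-2,2)$. When $\alpha\geq 2$ and $\beta=1$, I would use the mixed variant $(\alpha',\beta')=(2,1)$ with $H_0=(\Z_4)^{\alpha-1}$, descending to $(\alpha-1,1)$. In each of these three branches the group on which the inductive hypothesis is applied has a strictly smaller noncyclic abelian $2$-part (or coincides with one of the three base cases), so the induction proceeds.

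The main obstacle is the solitary exception $(\alpha,\beta)=(1,2)$, where $S_2\cong \Z_4\oplus(\Z_2)^2$ and $|S_2|=16$. Here the halving reduction would descend to $(1,0)$, whose $2$-part $\Z_4$ is cyclic and thus outside the scope of both the hypothesis and the induction. I would handle this case by instead applying Construction \ref{con:2,2} with $H_0=\Z_2$ and $(\alpha',\beta')=(2,1)$, so that $G_1\cong\Z_p\oplus\Z_2\oplus\Z_2$ is the base case of Lemma \ref{lem:p,2,2} and $G_2\cong\Z_p\oplus\Z_4\oplus(\Z_2)^2$ is the target. The remaining bookkeeping, namely that the four branches (halving, double promotion, mixed, and the $(1,2)$-fix) cover every $(\alpha,\beta)$ with $\alpha+\beta\geq 2$ apart from the excluded cyclic case $(1,0)$, is a routine case check.
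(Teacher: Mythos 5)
Your proof is correct and follows essentially the same route as the paper: induction on $|S_2|$ with base cases $|S_2|\in\{4,8\}$ supplied by Lemmas \ref{lem:p,2,2}, \ref{lem:p,2,2,2} and \ref{lem:p,2,4}, and the inductive step carried out by Construction \ref{con:2,2}. Your explicit case split --- in particular the fix for $(\alpha,\beta)=(1,2)$, where naive halving would descend to the cyclic group $\Z_4$ --- spells out a valid choice of decomposition $S_2\cong H\oplus\Z_{2^i}\oplus\Z_{2^j}$ that the paper's proof leaves implicit.
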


\begin{proof}
We proceed by induction on $|S_2|$. If $|S_2|\in\{4,8\}$, then $S_2$ is isomorphic to one of $(\Z_2)^2$, $(\Z_2)^3$ and $\Z_4\oplus \Z_2$. By Lemmas \ref{lem:p,2,2}, \ref{lem:p,2,2,2} and \ref{lem:p,2,4}, there exists an MRS$_G^*(p,4,|G|/(4p))$.

Assume that $|S_2|\geq 16$. Note that $S_2\cong H\oplus \Z_{2^i}\oplus \Z_{2^j}$ for some $i,j\in\{1,2\}$ and some abelian group $H$. Let $G_1\cong \Z_p\oplus H\oplus \Z_{2^{i-1}}\oplus \Z_{2^{j-1}}$. By induction hypothesis there exists an MRS$^*_{G_1}(p,4,|G_1|/(4p))$. Apply Construction \ref{con:2,2} to obtain an MRS$^*_{G_2}(p,4,|G_2|/(4p))$, where $G_2\cong \Z_p\oplus H\oplus \Z_{2^{i}}\oplus \Z_{2^{j}}\cong G$.
\end{proof}

\section{The case of $\exp(S_2)\geq 8$}\label{sec:exp8}

In this section, we establish the existence of an MRS$_{\Z_{p}\oplus S_2}(p,4;|S_2|/4)$ for any odd prime $p$ and any abelian and noncyclic $2$-group $S_2$ with $\exp(S_2)\geq 8$. We introduce the notion of incomplete magic rectangle sets to construct magic rectangle sets.

Let $a$, $b$, $c$ and $c'$ be positive integers. Let $G$ be an abelian group of order $abc$. Let $\mathcal{H}$ be a set of some subgroups of $G$ such that $\bigcup_{H\in {\cal H}}H$ consists of $abc'$ distinct elements of $G$. A {\em $(G,\mathcal{H})$-incomplete magic rectangle set}, written as IMRS$_{G\setminus (\bigcup_{H\in {\cal H}}H)}(a,b;c-c')$, is a collection of $c-c'$ arrays of size $a\times b$ with entries from $G\setminus (\bigcup_{H\in {\cal H}}H)$, each appearing exactly once, such that all row sums in every array are equal to a constant $\gamma\in G$ and all column sums in every array are equal to another constant $\delta\in G$. We use the notation IMRS$^*_{G\setminus (\bigcup_{H\in {\cal H}}H)}(a,b;c-c')$ to represent an IMRS$_{G\setminus (\bigcup_{H\in {\cal H}}H)}(a,b;c-c')$ where the sum of each row and the sum of each column in every $a\times b$ array are both zero.


Let $H$ be a subgroup of a group $G$, and let $p$ be an odd prime. In what follows, we always simply write $\Z_p\oplus(G\setminus H)$ instead of $(\Z_p\oplus G)\setminus (\Z_p\oplus H)$.

\begin{Example}\label{eg:IMRS}
There exists an IMRS$^*_{\Z_6\oplus (\Z_8\setminus \{0,4\})}(3,4;3)$ as follows:
$$
{\left(
    \begin{array}{cccc}
(0,1)  & (0,3)   & (1,1)  & (5,3) \\
(0,2)  & (0,6)   & (3,1)  & (3,7) \\
(0,5)  & (0,7)   & (2,6)  & (4,6)
    \end{array}
    \right)},\ \ \
{\left(
    \begin{array}{cccc}
(1,2)  & (3,2)   & (5,1)  & (3,3) \\
(1,7)  & (2,1)   & (5,5)  & (4,3) \\
(4,7)  & (1,5)   & (2,2)  & (5,2)
    \end{array}
    \right)},
$$
and
$$
{\left(
    \begin{array}{cccc}
(1,3)  & (4,1)   & (5,7)  & (2,5) \\
(2,7)  & (4,5)   & (5,6)  & (1,6) \\
(3,6)  & (4,2)   & (2,3)  & (3,5)
    \end{array}
    \right)}.
$$
\end{Example}

We shall present three constructions for $(G,\mathcal{H})$-incomplete magic rectangle sets to establish existences for $G$-magic rectangle sets.


\subsection{Construction I}

The following construction is simple but very useful.

\begin{Construction}\label{con:Hole}
Let $G$ be a finite abelian group with a subgroup $H$. Suppose that there exists an IMRS$^*_{G\setminus H}(a,b;|G\setminus H|/(ab))$. If there exists an MRS$^*_H(a,b;|H|/(ab))$, then there exists an MRS$^*_G(a,b;|G|/(ab))$.
\end{Construction}

\begin{Lemma}\label{lem:p,2,8}
For any odd prime $p$, there exist an IMRS$^*_{\Z_{2p}\oplus (\Z_8\setminus \{0,4\})}(p,4;3)$ and an MRS$^*_{\Z_{2p}\oplus \Z_8}(p,$ $4;4)$.
\end{Lemma}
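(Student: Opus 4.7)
The plan has two stages: first construct the IMRS$^*_{\Z_{2p}\oplus(\Z_8\setminus\{0,4\})}(p,4;3)$, then deduce the MRS$^*_{\Z_{2p}\oplus\Z_8}(p,4;4)$ from it via Construction~\ref{con:Hole}.

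For the IMRS$^*$ I would invoke Example~\ref{eg:IMRS} when $p=3$, and for $p\geq 5$ give an explicit construction exploiting the decomposition $\Z_{2p}=\{0,p\}\sqcup\bigsqcup_{x=1}^{p-1}\{x,-x\}$. The $12(p-1)$ elements whose first coordinate belongs to some non-trivial inverse pair can be placed in ``paired'' rows $R$ and $-R$ (entrywise negation), with each such pair contributing $(0,0)$ to every column sum of its array, and using $8$ mutually distinct entries provided no two columns of $R$ have opposite $\Z_8$-labels. The $12$ remaining elements with first coordinate in $\{0,p\}$ must then be placed in rows whose $\Z_{2p}$-components automatically sum to zero (guaranteed by using an even number of $p$'s per row), with $\Z_8$-labels chosen so that every column sum also vanishes. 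The template of Example~\ref{eg:IMRS}---which concentrates the six $(0,j)$ entries in one array and spreads the six $(p,j)$ entries two per array---suggests a workable pattern to generalize.

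Given the IMRS$^*$, the MRS$^*$ follows by Construction~\ref{con:Hole} with $G=\Z_{2p}\oplus\Z_8$ and $H=\Z_{2p}\oplus\langle 4\rangle$: since $\langle 4\rangle=\{0,4\}\leq\Z_8$, the complement $G\setminus H$ equals $\Z_{2p}\oplus(\Z_8\setminus\{0,4\})$; the index counts $|G\setminus H|/(4p)=3$ and $|H|/(4p)=1$ match the two ingredients; and the required MRS$^*_H(p,4;1)$ is supplied by Lemma~\ref{lem:p,2,2}, because $H\cong\Z_p\oplus(\Z_2)^2$ for odd $p$.

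The hard step is the explicit IMRS$^*$ construction for $p\geq 5$. The twelve self-inverse-in-$\Z_{2p}$ entries cannot be eliminated by negation pairing, so balancing their $\Z_8$-components to force zero column sums is delicate; I anticipate that the argument may branch on $p\bmod 4$ or $p\bmod 8$, with small primes (say $p\in\{5,7\}$) handled by explicit arrays before a uniform pattern takes over, in the spirit of Construction~\ref{con:2,2} earlier in the paper.
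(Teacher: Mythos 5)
Your second stage is exactly the paper's: Construction~\ref{con:Hole} with $H=\Z_{2p}\oplus\{0,4\}\cong\Z_p\oplus(\Z_2)^2$ and Lemma~\ref{lem:p,2,2} does convert the IMRS$^*$ into the MRS$^*_{\Z_{2p}\oplus\Z_8}(p,4;4)$, and the index bookkeeping you give is correct. The gap is in the first stage: for $p\geq 5$ you never actually produce the IMRS$^*$, and the decomposition you propose is structurally unworkable. Since $p$ is odd, each $p\times 4$ array has an odd number of rows, so after removing all negation-paired rows $\{R,-R\}$ an odd number of ``exceptional'' rows remains in each array, and these exceptional rows must \emph{by themselves} contribute $(0,0)$ to every column. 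Your exceptional set consists of the $12$ elements with first coordinate in $\{0,p\}$, i.e.\ exactly $3$ rows spread over $3$ arrays --- one exceptional row per array --- and a single row can only have all four column contributions equal to zero if every entry is $(0,0)$, which is impossible. So the ``template of Example~\ref{eg:IMRS}'' does not generalize in the way you suggest; that example works only because for $p=3$ the entire array is exceptional.

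The paper resolves precisely this obstruction by enlarging the exceptional set to the ten residues $I=\{0,\pm1,\pm2,p,p\pm1,p\pm2\}$ of $\Z_{2p}$, giving five explicitly written rows per array (an odd number, with row and column sums verified to be $(0,0)$ directly), and by grouping the remaining $2p-10$ residues into $(p-5)/2$ quadruples $\{x,y,-x,-y\}$ rather than pairs: each quadruple fills two negation-paired rows in each of the three arrays, using second coordinates $\{1,7\}$, $\{2,6\}$, $\{3,5\}$ respectively, so that all $24$ elements of the quadruple are consumed. (Pairs $\{x,-x\}$ alone cannot work either: they supply $12$ elements, again an odd number of rows.) No case split on $p\bmod 4$ or $p\bmod 8$ is needed. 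To complete your proof you would have to supply such an explicit five-row (or otherwise odd-row) exceptional block with vanishing row and column sums; as written, the central construction is missing.
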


\begin{proof}
When $p=3$, by Example \ref{eg:IMRS}, there exists an IMRS$^*_{\Z_6\oplus (\Z_8\setminus \{0,4\})}(3,4;3)$. When $p\geq5$, the first 5 rows in the three $p\times 4$ arrays of an IMRS$^*_{\Z_{2p}\oplus (\Z_8\setminus \{0,4\})}(p,4;3)$ are listed below:
$$
A_1={\left(
    \begin{array}{cccc}
(p,2)  & (0,2)    & (0,7)  & (p,5) \\
(p+2,3)  & (p-2,2)    & (p+1,1)  & (p-1,2) \\
(1,1)  & (1,3)    & (p-2,5)  & (p,7) \\
(2p-2,3)  & (p+1,6)    & (2,2)  & (p-1,5) \\
(2p-1,7)  & (0,3)    & (2p-1,1)  & (2,5) \\
    \end{array}
    \right)},
$$

$$
A_2={\left(
    \begin{array}{cccc}
(0,5)  & (p,3)    & (2p-2,1)  & (p+2,7) \\
(p-1,6)  & (0,6)    & (1,6)  & (p,6) \\
(2,7)  & (p,1)    & (0,1)  & (p-2,7) \\
(p+1,7)  & (p-1,1)    & (2,6)  & (2p-2,2) \\
(2p-2,7)  & (p+1,5)    & (2p-1,2)  & (p+2,2) \\
    \end{array}
    \right)},
$$
and
$$
A_3={\left(
    \begin{array}{cccc}
(p+1,3)  & (2p-1,3)    & (2,1)  & (p-2,1) \\
(2,3)  & (p-2,3)    & (2p-2,5)  & (p+2,5) \\
(p-1,7)  & (p+2,1)    & (1,2)  & (2p-2,6) \\
(2p-1,5)  & (p+2,6)    & (p-2,6)  & (1,7) \\
(2p-1,6)  & (p-1,3)    & (p+1,2)  & (1,5) \\
    \end{array}
    \right).}
$$
Note that $A_1$, $A_2$ and $A_3$ cover each element in the set $\{(i,j)\mid i\in I, j\in\Z_8\setminus\{0,4\}\}$ exactly once, where $I=\{0,1,2,p-2,p-1,p,p+1,p+2,2p-2,2p-1\}$, and all row sums and all column sums in $A_1$, $A_2$ and $A_3$ are $(0,0)$. Since all the elements in $\Z_{2p}\setminus I$ can be partitioned into $(p-5)/2$ quadruples, each of the form $\{x,y,-x,-y\}$, we can create two rows for each of the three $p\times 4$ arrays of an IMRS$^*_{\Z_{2p}\oplus (\Z_8\setminus \{0,4\})}(p,4;3)$ as follows:
$$
B_1=\left(
\begin{array}{cccc}
(x,1)  & (-x,1)    & (y,7)  & (-y,7) \\
(-x,7)  & (x,7)    & (-y,1)  & (y,1) \\
\end{array}
\right),
$$
$$
B_2=\left(
\begin{array}{cccc}
(x,2)  & (-x,2)    & (y,6)  & (-y,6) \\
(-x,6)  & (x,6)    & (-y,2)  & (y,2) \\
\end{array}
\right),
$$
and
$$
B_3=\left(
\begin{array}{cccc}
(x,3)  & (-x,3)    & (y,5)  & (-y,5) \\
(-x,5)  & (x,5)    & (-y,3)  & (y,3) \\
\end{array}
\right).
$$
This procedure provides us an IMRS$^*_{\Z_{2p}\oplus (\Z_8\setminus \{0,4\})}(p,4;3)$.

Furthermore, apply Construction \ref{con:Hole} with an MRS$^*_{\Z_{2p}\oplus \Z_2}(p,4,1)$, which exists by Lemma \ref{lem:p,2,2}, to obtain an MRS$^*_{\Z_{2p}\oplus \Z_8}(p,4,4)$.
\end{proof}

\begin{Lemma}\label{lem:p,4,8}
For any odd prime $p$, there exist an IMRS$^*_{\Z_{4p}\oplus (\Z_8\setminus\{0,4\})}(p,4;6)$ and an MRS$^*_{\Z_{4p}\oplus \Z_8}(p,$ $4;8)$.
\end{Lemma}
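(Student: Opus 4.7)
The plan is to follow the two-step template that the authors used in Lemma~\ref{lem:p,2,8}. First I would construct an IMRS$^*_{\Z_{4p}\oplus(\Z_8\setminus\{0,4\})}(p,4;6)$ directly by exhibiting six $p\times 4$ arrays; second, I would apply Construction~\ref{con:Hole} with an MRS$^*$ filling the hole $\Z_{4p}\oplus\{0,4\}$ to upgrade the IMRS into the MRS$^*_{\Z_{4p}\oplus\Z_8}(p,4;8)$.

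For the IMRS I would organize each of the six $p\times 4$ arrays as an \emph{exceptional seed} of $r$ rows whose first coordinates lie in a fixed subset $I'\subseteq\Z_{4p}$, followed by $(p-r)/2$ inverse-pair blocks of the same shape as the blocks $B_1,B_2,B_3$ of Lemma~\ref{lem:p,2,8}, namely
\[
\begin{pmatrix}
(x,j) & (-x,j) & (y,-j) & (-y,-j)\\
(-x,-j) & (x,-j) & (-y,j) & (y,j)
\end{pmatrix}
\]
indexed by quadruples $\{x,-x,y,-y\}\subseteq\Z_{4p}\setminus I'$ and by second-coordinate pairs $\{j,-j\}\in\bigl\{\{1,7\},\{2,6\},\{3,5\}\bigr\}$. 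Such pair-blocks automatically have vanishing row- and column-sums. A cell count (six arrays contributing $24r$ seed cells against $6|I'|$ required seed cells) forces $|I'|=4r$; the natural choice is $r=5$, $|I'|=20$, with $I'$ taken to be a symmetric neighborhood of the two order-$2$ elements $0$ and $2p$ of $\Z_{4p}$. The small prime $p=3$ (where $4p<20$) must be handled by a separate explicit example, just as the authors did for $p=3$ in Lemma~\ref{lem:p,2,8}; the case $p=5$ corresponds to the degenerate template with no pair-blocks at all; and for $p\ge 7$ one writes the six $5\times 4$ seeds once and extends via a fixed quadruple partition of $\Z_{4p}\setminus I'$.

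For the MRS step the hole $\Z_{4p}\oplus\{0,4\}$ is the subgroup $\Z_{4p}\oplus\Z_2\cong\Z_p\oplus\Z_4\oplus\Z_2$, and $|\Z_{4p}\oplus\Z_2|/(4p)=2$. Lemma~\ref{lem:p,2,4} supplies an MRS$^*_{\Z_p\oplus\Z_4\oplus\Z_2}(p,4;2)$, which I would plug into Construction~\ref{con:Hole} together with the IMRS to obtain the desired MRS$^*_{\Z_{4p}\oplus\Z_8}(p,4;8)$.

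The main obstacle will be producing the six explicit $5\times 4$ seed arrays: they must simultaneously (i) cover each of the $120$ pairs $(i,j)$ with $i\in I'$ and $j\in\Z_8\setminus\{0,4\}$ exactly once, (ii) have zero row and column sums in $\Z_{4p}\oplus\Z_8$, and (iii) mesh with the quadruple partition of $\Z_{4p}\setminus I'$ so that every element outside $I'$ appears in exactly three pair-blocks across the six arrays, one per second-coordinate pair. As in Lemma~\ref{lem:p,2,8}, this will be a direct but delicate combinatorial design; a handful of entries near $0$ and $2p$ are likely to require hand adjustment once the symmetric skeleton has been laid down.
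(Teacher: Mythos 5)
Your proposal follows essentially the same route as the paper: it likewise builds the IMRS$^*_{\Z_{4p}\oplus(\Z_8\setminus\{0,4\})}(p,4;6)$ from six explicit $5\times 4$ seed arrays over a $20$-element symmetric subset $I\subseteq\Z_{4p}$, extends each array by two-row inverse-pair blocks indexed by quadruples of $\Z_{4p}\setminus I$ together with the second-coordinate pairs $\{1,7\},\{2,6\},\{3,5\}$, treats $p=3$ by a separate explicit example, and then fills the hole $\Z_{4p}\oplus\{0,4\}\cong\Z_p\oplus\Z_2\oplus\Z_4$ via Construction~\ref{con:Hole} and Lemma~\ref{lem:p,2,4}. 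The only thing not supplied is the explicit data itself --- the six seed arrays (and the $p=3$ example) satisfying your conditions (i)--(iii) --- which is where essentially all of the content of the paper's proof lies, so what you have is a correct blueprint whose completion still requires exhibiting and verifying those arrays.
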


\begin{proof}
When $p=3$, we construct an IMRS$^*_{\Z_{12}\oplus (\Z_8\setminus \{0,4\})}(3,4;6)$ as follows:
$$
\left(
\begin{array}{cccc}
(2,2)  & (9,6)   & (7,2)  & (6,6) \\
(3,5)  & (11,1)   & (1,3)  & (9,7) \\
(7,1)  & (4,1)   & (4,3)  & (9,3)
\end{array}
\right), \ \ \
\left(
\begin{array}{cccc}
(0,5)  & (1,5)   & (7,7)  & (4,7) \\
(1,6)  & (2,1)   & (6,3)  & (3,6) \\
(11,5)  & (9,2)   & (11,6)  & (5,3)
\end{array}
\right),
$$
$$
\left(
\begin{array}{cccc}
(9,1)  & (7,3)   & (10,1)  & (10,3) \\
(8,1)  & (5,7)   & (9,5)  & (2,3) \\
(7,6)  & (0,6)   & (5,2)  & (0,2)
\end{array}
\right),\ \ \
\left(
\begin{array}{cccc}
(5,5)  & (11,3)   & (2,7)  & (6,1) \\
(5,6)  & (10,2)   & (10,6)  & (11,2) \\
(2,5)  & (3,3)   & (0,3)  & (7,5)
\end{array}
\right),
$$
$$
\left(
\begin{array}{cccc}
(1,1)  & (2,6)   & (3,2)  & (6,7) \\
(1,2)  & (6,5)   & (11,7)  & (6,2) \\
(10,5)  & (4,5)   & (10,7)  & (0,7)
\end{array}
\right),\ \ \
\left(
\begin{array}{cccc}
(8,6)  & (4,2)   & (4,6)  & (8,2) \\
(8,3)  & (8,5)   & (5,1)  & (3,7) \\
(8,7)  & (0,1)   & (3,1)  & (1,7)
\end{array}
\right).
$$
When $p\geq5$, the first 5 rows in the six $p\times 4$ arrays of an IMRS$^*_{\Z_{4p}\oplus (\Z_8\setminus \{0,4\})}(p,4;6)$ are listed below:
$$
A_1=\left(
    \begin{array}{cccc}
(2p-1,1)  & (3p-2,3)    & (3p+1,2)  & (2,2) \\
(2p-2,5)  & (p-1,7)    & (3p+2,5)  & (2p+1,7) \\
(0,5)  & (2p+1,6)    & (2p-1,2)  & (0,3) \\
(p+1,6)  & (2,3)    & (p-2,2)  & (2p-1,5) \\
(3p+2,7)  & (2p,5)    & (3p,5)  & (4p-2,7) \\
    \end{array}
    \right),
$$
$$
A_2=\left(
    \begin{array}{cccc}
(4p-2,2)  & (p+2,2)    & (p+2,5)  & (2p-2,7) \\
(2,6)  & (3p-2,2)    & (p+1,7)  & (4p-1,1) \\
(2p,7)  & (p-2,3)    & (p,1)  & (2,5) \\
(2p,2)  & (3p+1,6)    & (3p-1,2)  & (0,6) \\
(0,7)  & (1,3)    & (2p-2,1)  & (2p+1,5) \\
    \end{array}
    \right),
$$
$$
A_3=\left(
    \begin{array}{cccc}
(4p-2,5)  & (p+2,1)    & (2p-2,3)  & (p+2,7) \\
(4p-2,6)  & (2,7)    & (p-1,2)  & (3p+1,1) \\
(1,7)  & (p-1,6)    & (2p,6)  & (p,5) \\
(p+1,5)  & (p-1,3)    & (2p+1,2)  & (4p-1,6) \\
(3p+2,1)  & (p-2,7)    & (p+2,3)  & (3p-2,5) \\
    \end{array}
    \right),
$$
$$
A_4=\left(
    \begin{array}{cccc}
(2p+2,2)  & (3p-2,6)    & (3p,6)  & (0,2) \\
(3p+2,6)  & (p-1,1)    & (3p-1,7)  & (p,2) \\
(2p,1)  & (3p-1,3)    & (1,1)  & (3p,3) \\
(3p-2,1)  & (2p+2,3)    & (3p+1,5)  & (4p-1,7) \\
(2p-2,6)  & (3p+2,3)    & (3p-1,5)  & (1,2) \\
    \end{array}
    \right),
$$
$$
A_5=\left(
    \begin{array}{cccc}
(0,1)  & (4p-2,3)    & (2p+2,1)  & (2p,3) \\
(p+2,6)  & (p-2,5)    & (3p-1,6)  & (3p+1,7) \\
(p-2,1)  & (p+1,3)    & (p+1,1)  & (p,3) \\
(1,5)  & (1,6)    & (3p-2,7)  & (p,6) \\
(2p-1,3)  & (2p+2,7)    & (3p,1)  & (p-1,5) \\
    \end{array}
    \right),
$$
and
$$
A_6=\left(
    \begin{array}{cccc}
(p-2,6)  & (2p+2,6)    & (4p-1,2)  & (p+1,2) \\
(2p+2,5)  & (2p-1,6)    & (2p-2,2)  & (2p+1,3) \\
(4p-1,5)  & (3p,2)    & (3p+2,2)  & (2p-1,7) \\
(2p+1,1)  & (4p-1,3)    & (3p-1,1)  & (3p+1,3) \\
(3p,7)  & (p,7)    & (2,1)  & (4p-2,1) \\
    \end{array}
    \right).
$$
Note that $A_1,A_2,\ldots,A_6$ cover each element in the set $\{(i,j)\mid i\in I, j\in\Z_8\setminus\{0,4\}\}$ exactly once, where $I=\{0,1,2,p-2,p-1,p,p+1,p+2,2p-2,2p-1,2p,2p+1,2p+2,3p-2,3p-1,3p,3p+1,3p+2,4p-2,4p-1\}$, and all row sums and all column sums in $A_1,A_2,\ldots,A_6$ are $(0,0)$. Since all the elements in $\Z_{4p}\setminus I$ can be partitioned into $(p-5)/2$ octuples, each of the form $\{a,b,c,d,-a,-b,-c,-d\}$, we can create two rows for each of the six $p\times 4$ arrays of an IMRS$^*_{\Z_{4p}\oplus (\Z_8\setminus \{0,4\})}(p,4;6)$ as follows:
$$
B_1=\left(
    \begin{array}{cccc}
(a,1)  & (-a,1)    & (b,7)  & (-b,7) \\
(-a,7)  & (a,7)    & (-b,1)  & (b,1) \\
    \end{array}
    \right),
$$
$$
B_2=\left(
    \begin{array}{cccc}
(a,2)  & (-a,2)    & (b,6)  & (-b,6) \\
(-a,6)  & (a,6)    & (-b,2)  & (b,2) \\
    \end{array}
    \right),
$$
$$
B_3=\left(
    \begin{array}{cccc}
(a,3)  & (-a,3)    & (b,5)  & (-b,5) \\
(-a,5)  & (a,5)    & (-b,3)  & (b,3) \\
    \end{array}
    \right),
$$
$$
B_4=\left(
    \begin{array}{cccc}
(c,1)  & (-c,1)    & (d,7)  & (-d,7) \\
(-c,7)  & (c,7)    & (-d,1)  & (d,1) \\
    \end{array}
    \right),
$$
$$
B_5=\left(
    \begin{array}{cccc}
(c,2)  & (-c,2)    & (d,6)  & (-d,6) \\
(-c,6)  & (c,6)    & (-d,2)  & (d,2) \\
    \end{array}
    \right),
$$
and
$$
B_6=\left(
    \begin{array}{cccc}
(c,3)  & (-c,3)    & (d,5)  & (-d,5) \\
(-c,5)  & (c,5)    & (-d,3)  & (d,3) \\
    \end{array}
    \right).
$$
This procedure provides us an IMRS$^*_{\Z_{4p}\oplus (\Z_8\setminus \{0,4\})}(p,4;6)$.

Furthermore, apply Construction \ref{con:Hole} with an MRS$^*_{\Z_{4p}\oplus \Z_2}(p,4,2)$, which exists by Lemma \ref{lem:p,2,4}, to obtain an MRS$^*_{\Z_{4p}\oplus \Z_8}(p,4;8)$.
\end{proof}

\subsection{Construction II}

The following construction is a variation of Construction \ref{con:Hole}.

\begin{Construction}\label{con:Hole-1}
Let $H_1$ and $H_2$ be two subgroups of a finite abelian group $G$ and $H_1\cap H_2=H$. Suppose that there exists an IMRS$^*_{G\setminus (H_1\cup H_2)}(a,b;|G\setminus (H_1\cup H_2)|/(ab))$. If there exist an IMRS$^*_{H_1\setminus H}(a,b;|H_1\setminus H|/(ab))$ and an MRS$^*_{H_2}(a,b;|H_2|/(ab))$, then there exists an MRS$^*_G(a,b;|G|/(ab))$.
\end{Construction}

\begin{Lemma}\label{lem:p,8,8}
For any odd prime $p$, there exists an MRS$^*_{\Z_p\oplus \Z_8\oplus \Z_8}(p,4;16)$.
\end{Lemma}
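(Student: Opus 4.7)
The plan is to invoke Construction \ref{con:Hole-1} with the two subgroups
$$H_1 = \Z_p \oplus \Z_8 \oplus \{0,4\} \quad\text{and}\quad H_2 = \Z_p \oplus \{0,4\} \oplus \Z_8$$
of $G = \Z_p \oplus \Z_8 \oplus \Z_8$, where $\{0,4\}$ denotes the unique subgroup of order $2$ of $\Z_8$. Both $H_1$ and $H_2$ are isomorphic to $\Z_p \oplus \Z_8 \oplus \Z_2 \cong \Z_{2p}\oplus \Z_8$ via CRT on the $\Z_p \oplus \Z_2$ factor. Their intersection is $H = H_1 \cap H_2 = \Z_p \oplus \{0,4\} \oplus \{0,4\} \cong \Z_p \oplus \Z_2 \oplus \Z_2$, and the disjoint decomposition $G = (G\setminus(H_1\cup H_2)) \sqcup (H_1\setminus H) \sqcup H_2$ splits into pieces of sizes $36p$, $12p$, and $16p$, contributing $9+3+4=16$ arrays as required.

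Lemma \ref{lem:p,2,8} immediately supplies two of the three ingredients required by Construction \ref{con:Hole-1}. It gives MRS$^*_{\Z_{2p}\oplus \Z_8}(p,4;4)$, which under the isomorphism $H_2 \cong \Z_{2p}\oplus \Z_8$ furnishes the MRS$^*_{H_2}(p,4;4)$. It also gives IMRS$^*_{\Z_{2p}\oplus(\Z_8\setminus\{0,4\})}(p,4;3)$: under the isomorphism $H_1 \cong \Z_{2p}\oplus \Z_8$, the subset $H_1 \setminus H = \Z_p \oplus (\Z_8 \setminus \{0,4\}) \oplus \{0,4\}$ is carried precisely onto $\Z_{2p}\oplus(\Z_8\setminus\{0,4\})$, yielding IMRS$^*_{H_1\setminus H}(p,4;3)$.

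Thus everything hinges on constructing an IMRS$^*_{G\setminus(H_1\cup H_2)}(p,4;9)$ for
$$G\setminus(H_1\cup H_2) \;=\; \Z_p \oplus (\Z_8\setminus\{0,4\}) \oplus (\Z_8\setminus\{0,4\}).$$
I would build this IMRS$^*$ explicitly, following the template of Lemmas \ref{lem:p,2,8} and \ref{lem:p,4,8}. For $p=3$, hand-list the $9$ arrays of size $3\times 4$ over a $108$-element set. For $p\geq 5$, write down a few ``core'' rows per array indexed by a small subset $I\subseteq\Z_p$ (such as $\{0,\pm 1,\pm 2\}$) and then pair the remaining first-coordinate values of $\Z_p\setminus I$ into $\{x,-x\}$ pairs, filling the remaining rows of the arrays with two-row blocks whose first-coordinate pattern is $(x,-x,y,-y)$ and whose second and third coordinates in $\Z_8\setminus\{0,4\}$ are chosen symmetrically so that all row and column sums in $\Z_p\oplus\Z_8\oplus\Z_8$ vanish.

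The main obstacle is precisely this explicit construction. Compared with the previous two lemmas the puncture is now two-dimensional (both $\Z_8$-factors miss $\{0,4\}$), so the row- and column-sum balance must be achieved in both $2$-group coordinates simultaneously; moreover the first coordinate lives only in $\Z_p$ rather than in a larger cyclic extension $\Z_{2p}$ or $\Z_{4p}$, so fewer symmetric pairings are available. A hand-tailored listing for $p=3$ together with a carefully engineered parameterized template for $p\geq 5$ will be required, extending the methodology of the proofs of Lemmas \ref{lem:p,2,8} and \ref{lem:p,4,8}.
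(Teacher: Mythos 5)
Your reduction is exactly the one the paper uses: the same two subgroups $H_1,H_2\cong\Z_p\oplus\Z_2\oplus\Z_8$ with intersection $H\cong\Z_p\oplus\Z_2\oplus\Z_2$, the same appeal to Construction \ref{con:Hole-1}, and the same sourcing of the IMRS$^*_{H_1\setminus H}(p,4;3)$ and MRS$^*_{H_2}(p,4;4)$ from Lemma \ref{lem:p,2,8}; your counting $9+3+4=16$ is correct. The gap is that the remaining ingredient, an IMRS$^*_{G\setminus(H_1\cup H_2)}(p,4;9)$ over $\Z_p\oplus(\Z_8\setminus\{0,4\})\oplus(\Z_8\setminus\{0,4\})$, is the entire substance of the lemma, and you only assert that it can be built; no construction is exhibited even in outline detail sufficient to verify. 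A plan of the form ``hand-list for $p=3$, core rows plus symmetric pairs for $p\geq 5$'' is a research program, not a proof, precisely because (as you yourself note) the puncture is now two-dimensional and the first coordinate offers only $\Z_p$ for balancing.

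Two remarks on your sketch of that missing piece. First, your proposed two-row filler with first-coordinate pattern $(x,-x,y,-y)$ consumes two pairs at a time, so it needs $|\Z_p\setminus I|\equiv 0\pmod 4$; with $|I|=5$ this forces $p\equiv 1\pmod 4$ and fails for, e.g., $p=7$. The paper avoids this by taking the core set $I=\{0,1,p-1\}$ of size $3$ (nine arrays $\times$ three core rows $\times$ four columns $=108=3\cdot 6\cdot 6$ core elements) and then letting each single pair $\{x,-x\}$ supply two rows to \emph{each} of the nine arrays, which exactly exhausts $\{x,-x\}\times(\Z_8\setminus\{0,4\})^2$ of size $72=2\cdot 4\cdot 9$; the row sums vanish because the $\Z_8\times\Z_8$ parts within each row pair off as $(j,k)$ and $(-j,-k)$. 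Second, for $p=3$ no separate hand listing is needed: the core construction on $I=\{0,1,-1\}=\Z_3$ already covers everything. So your framework is the right one, but to make this a proof you must actually produce the nine arrays (or at least their three-row cores and the pair gadget) and verify the zero row and column sums in all three coordinates.
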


\begin{proof}
Let $H_1\cong \Z_p\oplus \Z_2\oplus \Z_8$ and $H_2\cong \Z_p\oplus \Z_8\oplus \Z_2$ be two distinct subgroups of $G=\Z_p\oplus \Z_8\oplus \Z_8$. Let $H=H_1\cap H_2\cong \Z_p\oplus \Z_2\oplus \Z_2$.

Firstly, we construct an IMRS$^*_{G\setminus (H_1\cup H_2)}(p,4;9)$. The first 3 rows in the nine $p\times 4$ arrays of an IMRS$^*_{G\setminus (H_1\cup H_2)}(p,4;9)$ are listed below:
$$
A_1=\left(
    \begin{array}{cccc}
(0,3,3)  & (0,3,5)  & (0,5,3) &  (0,5,5) \\
(1,6,6)  & (1,6,2)  & (p-1,2,6) &  (p-1,2,2) \\
(p-1,7,7)  & (p-1,7,1) & (1,1,7)  &  (1,1,1)
    \end{array}
    \right),
$$
$$
A_2=\left(
    \begin{array}{cccc}
(0,3,2)  & (0,3,6)  & (0,5,2) &  (0,5,6) \\
(1,6,1)  & (1,6,7)  & (p-1,2,1) &  (p-1,2,7) \\
(p-1,7,5)  & (p-1,7,3) & (1,1,5)  &  (1,1,3)
    \end{array}
    \right),
$$
$$
A_3=\left(
    \begin{array}{cccc}
(0,3,1)  & (0,3,7)  & (0,5,1) &  (0,5,7) \\
(1,6,5)  & (1,6,3)  & (p-1,2,5) &  (p-1,2,3) \\
(p-1,7,2)  & (p-1,7,6) & (1,1,2)  &  (1,1,6)
    \end{array}
    \right),
$$
$$
A_4=\left(
    \begin{array}{cccc}
(0,2,3)  & (0,2,5)  & (0,6,3) & (0,6,5)  \\
(p-1,1,6)  & (p-1,1,2)  & (1,7,6) & (1,7,2)  \\
(1,5,7)  & (1,5,1)  & (p-1,3,7) & (p-1,3,1)
    \end{array}
    \right),
$$
$$
A_5=\left(
    \begin{array}{cccc}
(0,2,2)  & (0,2,6)  & (0,6,2) & (0,6,6)  \\
(p-1,1,1)  & (p-1,1,7)  & (1,7,1) & (1,7,7)  \\
(1,5,5)  & (1,5,3)  & (p-1,3,5) & (p-1,3,3)
    \end{array}
    \right),
$$
$$
A_6=\left(
    \begin{array}{cccc}
(0,2,1)  & (0,2,7)  & (0,6,1) & (0,6,7)  \\
(p-1,1,5)  & (p-1,1,3)  & (1,7,5) & (1,7,3)  \\
(1,5,2)  & (1,5,6)  & (p-1,3,2) & (p-1,3,6)
    \end{array}
    \right),
$$
$$
A_7=\left(
    \begin{array}{cccc}
(0,1,3)  & (0,1,5)  & (0,7,3) & (0,7,5)  \\
(p-1,5,6)  & (p-1,5,2)  & (1,3,6) & (1,3,2)  \\
(1,2,7)  & (1,2,1)  & (p-1,6,7) & (p-1,6,1)
    \end{array}
    \right),
$$
$$
A_8=\left(
    \begin{array}{cccc}
(0,1,2)  & (0,1,6)  & (0,7,2) & (0,7,6)  \\
(p-1,5,1)  & (p-1,5,7)  & (1,3,1) & (1,3,7)  \\
(1,2,5)  & (1,2,3)  & (p-1,6,5) & (p-1,6,3)
    \end{array}
    \right),
$$
and
$$
A_9=\left(
    \begin{array}{cccc}
(0,1,1)  & (0,1,7)  & (0,7,1) & (0,7,7)  \\
(p-1,5,5)  & (p-1,5,3)  & (1,3,5) & (1,3,3)  \\
(1,2,2)  & (1,2,6)  & (p-1,6,2) & (p-1,6,6)
    \end{array}
    \right).
$$
Note that $A_1,A_2,\ldots,A_9$ cover each element in the set $\{(i,j,k)\mid i\in \{0,1,p-1\}, j\in\Z_8\setminus\{0,4\},k\in\Z_8\setminus\{0,4\}\}$ exactly once, and all row sums and all column sums in $A_1,A_2,\ldots,A_9$ are $(0,0,0)$. Since all the elements in $\Z_{p}\setminus \{0,1,p-1\}$ can be partitioned into $(p-3)/2$ pairs, each of the form $\{x,-x\}$, we can create two rows for each of the nine $p\times 4$ arrays of an IMRS$^*_{G\setminus (H_1\cup H_2)}(p,4;9)$ as follows:
$$
B_1=\left(
    \begin{array}{cccc}
(x,1,1)  & (x,7,7)  & (-x,1,7) & (-x,7,1)  \\
(-x,7,7)  & (-x,1,1)  & (x,7,1) & (x,1,7)
    \end{array}
    \right),
$$
$$
B_2=\left(
    \begin{array}{cccc}
(x,1,2)  & (x,7,6)  & (-x,1,6) & (-x,7,2)  \\
(-x,7,6)  & (-x,1,2)  & (x,7,2) & (x,1,6)
    \end{array}
    \right),
$$
$$
B_3=\left(
    \begin{array}{cccc}
(x,1,3)  & (x,7,5)  & (-x,1,5) & (-x,7,3)  \\
(-x,7,5)  & (-x,1,3)  & (x,7,3) & (x,1,5)
    \end{array}
    \right),
$$
$$
B_4=\left(
    \begin{array}{cccc}
(x,2,1)  & (x,6,7)  & (-x,2,7) & (-x,6,1)  \\
(-x,6,7)  & (-x,2,1)  & (x,6,1) & (x,2,7)
    \end{array}
    \right),
$$
$$
B_5=\left(
    \begin{array}{cccc}
(x,2,2)  & (x,6,6)  & (-x,2,6) & (-x,6,2)  \\
(-x,6,6)  & (-x,2,2)  & (x,6,2) & (x,2,6)
    \end{array}
    \right),
$$
$$
B_6=\left(
    \begin{array}{cccc}
(x,2,3)  & (x,6,5)  & (-x,2,5) & (-x,6,3)  \\
(-x,6,5)  & (-x,2,3)  & (x,6,3) & (x,2,5)
    \end{array}
    \right),
$$
$$
B_7=\left(
    \begin{array}{cccc}
(x,3,1)  & (x,5,7)  & (-x,3,7) & (-x,5,1)  \\
(-x,5,7)  & (-x,3,1)  & (x,5,1) & (x,3,7)
    \end{array}
    \right),
$$
$$
B_8=\left(
    \begin{array}{cccc}
(x,3,2)  & (x,5,6)  & (-x,3,6) & (-x,5,2)  \\
(-x,5,6)  & (-x,3,2)  & (x,5,2) & (x,3,6)
    \end{array}
    \right),
$$
and
$$
B_9=\left(
    \begin{array}{cccc}
(x,3,3)  & (x,5,5)  & (-x,3,5) & (-x,5,3)  \\
(-x,5,5)  & (-x,3,3)  & (x,5,3) & (x,3,5)
    \end{array}
    \right).
$$
This procedure provides us an IMRS$^*_{G\setminus (H_1\cup H_2)}(p,4;9)$.

Furthermore, apply Construction \ref{con:Hole-1} with an IMRS$^*_{H_1\setminus H}(p,4,3)$ and an MRS$^*_{H_2}(p,4,4)$, which exist by Lemma \ref{lem:p,2,8}, to obtain an MRS$^*_{G}(p,4;16)$.
\end{proof}

\subsection{Construction III}

To present a new construction for incomplete magic rectangle sets, we need to define a special kind of arrays as follows.

Let $C_2$ be a $2\times 6$ array
$$
{\left(
    \begin{array}{cccccc}
3 & -3 & 2 & -2 & 1 & -1 \\
-3 & 3 & -2 & 2 & -1 & 1 \\
    \end{array}
    \right)}
$$
and $C_3$ be a $3\times 6$ array
$$
{\left(
    \begin{array}{cccccc}
3 & -3 & 2 & -2 & 1 & -1 \\
-2 & 2 & 1 & -1 & -3 & 3 \\
-1 & 1 & -3 & 3 & 2 & -2 \\
    \end{array}
    \right).}
$$
Let $p$ be an odd prime. Let $C_p$ be a $p\times 6$ array whose first 3 rows form a $C_3$ and the remaining $p-3$ rows are $(p-3)/2$ copies of $C_2$.

Each row of $C_p$ is a permutation of $\{1,2,3,-1,-2,-3\}$. The sum of each column of $C_p$ is $0$. Write $C_p=(c_{il})$, $1\leq i\leq p$ and $1\leq l\leq 6$. One can see that $c_{i,l}+c_{i,l+1}=0$ for any $1\leq i\leq p$ and any $l\in\{1,3,5\}$.

\begin{Construction}\label{con:C[p]}
Let $\alpha \geq0$ be an integer and $p$ be an odd prime. Let $H$ be a finite abelian group. Let $G\cong \Z_p\oplus H\oplus \Z_{2^{\alpha}}$ and $G_1\cong \Z_p\oplus H\oplus \Z_{2^{\alpha+3}}$. Let $G_2\cong \Z_p\oplus H\oplus \Z_{2^{\alpha+1}}$ be a subgroup of $G_1$. If there exists an MRS$^*_G(p,4;|G|/(4p))$, then there exists  an IMRS$^*_{G_1\setminus G_2}(p,4;6|G|/(4p))$.
\end{Construction}

\begin{proof}
Let $G_1=\{(w,x) \mid w\in\Z_p\oplus H, x\in\Z_{2^{\alpha+3}}\}$. Then $G_2=\{(w,4x) \mid w\in\Z_p\oplus H, x\in\Z_{2^{\alpha+1}}\}$ is a subgroup of $G_1$. Let $G=\{(w,u) \mid w\in\Z_p\oplus H, u\in\Z_{2^{\alpha}}\}$. Note that every element $(w,x)$ of $G_1\setminus G_2$ has a unique representation as $(w,x)=(w,8u+z)$, where $(w,u)\in G$ and  $z\in\{1,2,3,-1,-2,-3\}$.

Let $M_1,M_2,\ldots, M_{|G|/(4p)}$ be an MRS$^*_G(p,4;|G|/(4p))$, where $M_s=(m^s_{ij})$, $1\leq s\leq|G|/(4p)$, $1\leq i\leq p$ and $1\leq j\leq 4$. Write $m^s_{ij}=(b^s_{ij},a^s_{ij})$, where $b^s_{ij}\in \Z_p\oplus H$ and $a^s_{ij}\in \Z_{2^{\alpha}}$.

Let $C_p=(c_{il})$ be the $p\times 6$ array defined before Construction \ref{con:C[p]}, where $1\leq i\leq p$ and $1\leq l\leq 6$. For every $1\leq s\leq|G|/(4p)$ and $1\leq j\leq 4$, replace each column
$$((b^s_{1j},a^s_{1j}), (b^s_{2j},a^s_{2j}),\cdots,(b^s_{pj},a^s_{pj}))^T$$
of $M_s$ by 6 columns
$$((b^s_{1j},8a^s_{1j}+c_{1l}), (b^s_{2j},8a^s_{2j}+c_{2l}),\cdots,(b^s_{pj},8a^s_{pj}+c_{pl}))^T,$$
where $1\leq l\leq 6$. We use the triples $(s,j,l)$ to index these columns.

For every $1\leq s\leq |G|/(4p)$, based on $M_s$, we construct $6$ arrays of size $p\times 4$ as follows:
$$
M_{s1}=(
    \begin{array}{cccc}
(s,1,1)  & (s,2,2)    & (s,3,1)  & (s,4,2) 
    \end{array}
    ),
$$
$$
M_{s2}=(
    \begin{array}{cccc}
(s,1,2)  & (s,2,1)    & (s,3,2)  & (s,4,1) 
    \end{array}
    ),
$$
$$
M_{s3}=(
    \begin{array}{cccc}
(s,1,3)  & (s,2,4)    & (s,3,3)  & (s,4,4) 
    \end{array}
    ),
$$
$$
M_{s4}=(
    \begin{array}{cccc}
(s,1,4)  & (s,2,3)    & (s,3,4)  & (s,4,3) 
    \end{array}
    ),
$$
$$
M_{s5}=(
    \begin{array}{cccc}
(s,1,5)  & (s,2,6)    & (s,3,5)  & (s,4,6) 
    \end{array}
    ),
$$
and
$$
M_{s6}=(
    \begin{array}{cccc}
(s,1,6)  & (s,2,5)    & (s,3,6)  & (s,4,5) 
    \end{array}
    ).
$$
Then the set ${\cal M}=\{M_{sl}\mid 1\leq s\leq |G|/(4p),1\leq l\leq 6\}$ is an IMRS$^*_{G_1\setminus G_2}(p,4;6|G|/(4p))$.

Since each element of $G$ appears exactly once in $M_1,M_2,\cdots, M_{|G|/(4p)}$ and each row of $C_p$ is a permutation of $\{1,2,3,-1,-2,-3\}$, it is readily checked that each element of $G_1\setminus G_2$ appears exactly once in the arrays from $\cal M$. Since $c_{i,l}+c_{i,l+1}=0$ for any $1\leq i\leq p$ and any $l\in\{1,3,5\}$, the sum of each row  and the sum of each column in every array in $\cal M$ are both zero.
\end{proof}

\begin{Lemma}\label{lem:exp8}
Let $p$ be an odd prime. Let $S_2$ be a finite abelian and noncyclic $2$-group with $\exp(S_2)\geq 8$. Then there exists an MRS$^*_{\Z_{p}\oplus S_2}(p,4;|S_2|/4)$.
\end{Lemma}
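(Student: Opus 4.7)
By induction on $|S_2|$. The base cases $S_2\in\{\Z_8\oplus\Z_2,\,\Z_8\oplus\Z_4,\,\Z_8\oplus\Z_8\}$ are already settled by Lemmas~\ref{lem:p,2,8}, \ref{lem:p,4,8} and~\ref{lem:p,8,8}. For the inductive step, fix a cyclic decomposition $S_2\cong\Z_{2^{a_1}}\oplus\cdots\oplus\Z_{2^{a_k}}$ with $a_1\geq\cdots\geq a_k\geq 1$, $k\geq 2$ and $a_1\geq 3$, and write $S_2\cong H\oplus\Z_{2^{\alpha+3}}$ where $H=\Z_{2^{a_2}}\oplus\cdots\oplus\Z_{2^{a_k}}$ and $\alpha=a_1-3\geq 0$.

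The main strategy is to combine Constructions~\ref{con:C[p]} and~\ref{con:Hole}: Construction~\ref{con:C[p]} turns an MRS$^*_{\Z_p\oplus H\oplus\Z_{2^\alpha}}(p,4;\cdot)$ into an IMRS$^*$ on $(\Z_p\oplus S_2)\setminus(\Z_p\oplus H\oplus\Z_{2^{\alpha+1}})$, and Construction~\ref{con:Hole} then plugs that hole with any MRS$^*_{\Z_p\oplus H\oplus\Z_{2^{\alpha+1}}}(p,4;\cdot)$ to yield the desired MRS$^*_{\Z_p\oplus S_2}(p,4;|S_2|/4)$. So it suffices to produce the two auxiliary MRS$^*$'s. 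Let $T_1=H\oplus\Z_{2^\alpha}$ and $T_2=H\oplus\Z_{2^{\alpha+1}}$ be their $2$-parts; each has order strictly smaller than $|S_2|$. Since $\exp(T_i)$ is a power of $2$, it is either at most $4$ (in which case $T_i\cong(\Z_4)^\beta\oplus(\Z_2)^\gamma$ and Lemma~\ref{lem:exp4} delivers the MRS$^*$) or at least $8$ (in which case the induction hypothesis does), provided $T_i$ is noncyclic.

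The main bookkeeping is to confirm that noncyclicity. Since $\alpha+1\geq 1$, the group $T_2$ carries $k\geq 2$ nontrivial cyclic summands and is automatically noncyclic. The only way for $T_1$ to be cyclic is $\alpha=0$ together with $k=2$, i.e.\ $S_2=\Z_8\oplus\Z_{2^{a_2}}$ with $1\leq a_2\leq 3$, which is precisely one of the three base cases excluded from the inductive step. In every remaining configuration, both $T_1$ and $T_2$ are noncyclic abelian $2$-groups of order less than $|S_2|$, so either Lemma~\ref{lem:exp4} or the induction hypothesis (according to whether $\exp(T_i)\leq 4$ or $\exp(T_i)\geq 8$) furnishes the required MRS$^*$, and Constructions~\ref{con:C[p]} and~\ref{con:Hole} then assemble MRS$^*_{\Z_p\oplus S_2}(p,4;|S_2|/4)$. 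The only subtle point in the argument is exactly this noncyclicity dichotomy, which fortunately collapses onto the three base cases already in hand.
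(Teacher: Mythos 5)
Your proposal is correct and follows essentially the same route as the paper: induction on $|S_2|$ anchored at the three explicit base cases of Lemmas~\ref{lem:p,2,8}, \ref{lem:p,4,8} and \ref{lem:p,8,8}, with the inductive step assembled from Construction~\ref{con:C[p]} followed by Construction~\ref{con:Hole}, calling Lemma~\ref{lem:exp4} or the induction hypothesis according to the exponent of the two smaller groups. The only difference is organizational: the paper treats $|S_2|=32$ and $|S_2|=64$ by explicit casework before giving the general argument for $|S_2|\geq 128$, whereas your single uniform inductive step (with the correct observation that $T_1$ is cyclic exactly in the three base-case configurations) subsumes those cases cleanly.
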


\begin{proof}
We proceed by induction on $|S_2|$. If $|S_2|=16$, since $\exp(S_2)\geq 8$, we have $S_2\cong \Z_2\oplus \Z_8$. By Lemma \ref{lem:p,2,8}, there exists an MRS$^*_{\Z_{p}\oplus \Z_2\oplus \Z_8}(p,4,4)$ for any odd prime $p$.

If $|S_2|=32$, then $S_2$ is isomorphic to one of $\Z_4\oplus \Z_8$, $\Z_2\oplus \Z_2\oplus\Z_8$ and $\Z_2\oplus \Z_{16}$. By Lemma \ref{lem:p,4,8}, there exists an MRS$^*_{\Z_p\oplus \Z_4\oplus \Z_8}(p,4,8)$ for any odd prime $p$. Start from an MRS$^*_{\Z_p\oplus \Z_2\oplus \Z_2}(p,4,1)$, which exists by Lemma \ref{lem:p,2,2}, and then apply Construction \ref{con:C[p]} with $(\alpha,H)\in\{(0,\Z_2\oplus \Z_2),(1,\Z_2)\}$ to obtain an IMRS$^*_{\Z_p\oplus (S_2\setminus R)}(p,4,6)$ for $(S_2,R)\in \{(\Z_2\oplus \Z_2\oplus \Z_{8}, \Z_2\oplus \Z_2\oplus \Z_2), (\Z_2\oplus \Z_{16}, \Z_2\oplus \Z_4)\}$. Since an MRS$^*_{\Z_p\oplus R}(p,4,2)$ exists by Lemmas \ref{lem:p,2,2,2} and \ref{lem:p,2,4}, we can apply Construction \ref{con:Hole} to obtain an MRS$^*_{\Z_p\oplus S_2}(p,4,8)$.


If $|S_2|=64$, then up to isomorphism, $S_2\in\{(\Z_8)^2, \Z_2\oplus \Z_{32}, \Z_4\oplus \Z_{16}, (\Z_2)^2\oplus \Z_{16}, \Z_2\oplus \Z_4\oplus \Z_8, (\Z_2)^3\oplus \Z_8\}$. By Lemma \ref{lem:p,8,8}, there exists an MRS$^*_{\Z_p\oplus (\Z_8)^2}(p,4,16)$ for any odd prime $p$. Start from an MRS$^*_{\Z_p\oplus (\Z_2)^3}(p,4,2)$, which exists by Lemma \ref{lem:p,2,2,2}, and then apply Construction \ref{con:C[p]} with $(\alpha,H)\in\{(0,(\Z_2)^3),(1,(\Z_2)^2)\}$ to obtain an IMRS$^*_{\Z_p\oplus (S_2\setminus R)}(p,4,12)$ for $(S_2,R)\in \{((\Z_2)^3\oplus \Z_8, (\Z_2)^4), ((\Z_2)^2\oplus \Z_{16},(\Z_2)^2\oplus \Z_4 )\}$. Since an MRS$^*_{\Z_p\oplus R}(p,4,4)$ exists by Lemma \ref{lem:exp4}, we can apply Construction \ref{con:Hole} to obtain an MRS$^*_{\Z_p\oplus S_2}(p,4,16)$. Begin with an MRS$^*_{\Z_p\oplus \Z_2\oplus \Z_4}(p,4,2)$, which exists by Lemma \ref{lem:p,2,4}, and then apply Construction \ref{con:C[p]} with $(\alpha,H)\in\{(0,\Z_2\oplus \Z_4),(1,\Z_4)$, $(2,\Z_2)\}$ to obtain an IMRS$^*_{\Z_p\oplus (S_2\setminus R)}(p,4,12)$ for $(S_2,R)\in \{(\Z_2\oplus \Z_4\oplus \Z_8, \Z_2\oplus \Z_4\oplus \Z_2), (\Z_4\oplus \Z_{16}, \Z_4\oplus \Z_4), (\Z_2\oplus \Z_{32}, \Z_2\oplus \Z_8)\}$. Since an MRS$^*_{\Z_p\oplus R}(p,4,4)$ exists by Lemmas \ref{lem:exp4} and \ref{lem:p,2,8}, we can apply Construction \ref{con:Hole} to obtain an MRS$^*_{\Z_p\oplus S_2}(p,4,16)$.

If $|S_2|=2^n\geq 128$, then $S_2\cong N\oplus\Z_{2^\beta}$, where $\beta\geq3$, $\Z_{2^\beta}$ is isomorphic to a cyclic subgroup of $S_2$ with the largest order, and $N$ is a nontrivial abelian $2$-group with $\exp(N)\leq2^\beta$. If $\beta=3$, since $\exp(N)\leq8$ and $|N|=2^{n-3}\geq16$, $N$ is noncyclic. If $\beta\geq 4$, then $N\oplus\Z_{2^{\beta-3}}$ is noncyclic. Therefore, $N\oplus\Z_{2^{\beta-3}}$ for any $\beta\geq 3$ is an abelian and noncyclic $2$-group. If $\exp(N\oplus\Z_{2^{\beta-3}})\geq8$, then there exists an MRS$^*_{\Z_p\oplus N\oplus\Z_{2^{\beta-3}}}(p,4,2^{n-5})$ by induction hypothesis. If $\exp(N\oplus\Z_{2^{\beta-3}})\leq 4$, then there exists an MRS$^*_{\Z_p\oplus N\oplus\Z_{2^{\beta-3}}}(p,4,2^{n-5})$ by Lemma \ref{lem:exp4}.

Start from an MRS$^*_{\Z_p\oplus N\oplus\Z_{2^{\beta-3}}}(p,4,2^{n-5})$, and then apply Construction \ref{con:C[p]} with $(\alpha,H)=(\beta-3,N)$ to obtain an IMRS$^*_{\Z_p\oplus N\oplus(\Z_{2^\beta}\setminus \Z_{2^{\beta-2}})}$ $(p,4;3\cdot2^{n-4})$. If $\exp(N\oplus\Z_{2^{\beta-2}})\geq8$, then there exists an MRS$^*_{\Z_p\oplus N\oplus\Z_{2^{\beta-2}}}(p,4,2^{n-4})$ by induction hypothesis. If $\exp(N\oplus\Z_{2^{\beta-2}})\leq 4$, then there exists an MRS$^*_{\Z_p\oplus N\oplus\Z_{2^{\beta-2}}}(p,4,2^{n-4})$ by Lemma \ref{lem:exp4}. So there exists an MRS$^*_{\Z_p\oplus S_2}(p,4;2^{n-2})$ by Construction \ref{con:Hole}.
\end{proof}

Now one can combine the results of Lemmas \ref{lem:necessary}, \ref{thm:CH1}, \ref{lem:odd}, \ref{lem:exp4} and \ref{lem:exp8} to complete the proof of Theorem \ref{thm:main}.

\section{Concluding remarks}\label{sec:con}

This paper establishes the necessary and sufficient conditions for the existence of a $G$-magic rectangle set MRS$_G(a,b;c)$ for any finite abelian group $G$, thereby confirming Conjecture 5.1 presented by Cichacz and Hinc in \cite{CH21-1}.

Magic rectangle sets can be seen as a weak version of $3$-dimensional magic rectangles, which was introduced by Hagedorn \cite{Hage}. Let $a_1,a_2,\ldots,a_n$ be positive integers. An {\em $n$-dimensional magic rectangle} $n$-MR$(a_1,a_2,\ldots,a_n)$ is an $a_1\times a_2\times \cdots \times a_n$ array with entries $d_{i_1,i_2,\ldots,i_n}$ which are elements of $\{1,2,\ldots,a_1a_2\ldots a_n\}$, each appearing exactly once, such that all sums in the $k$-th direction, $1\leq k\leq n$, are equal to a constant $\sigma_k$. That is, for every $k$, $1\leq k\leq n$, $\sum_{j=1}^{a_k}d_{b_1,b_2,\ldots,b_{k-1},j,b_{k+1},\ldots,b_n}=\sigma_k$
for every selection of indices $b_1,b_2,\ldots,b_{k-1},b_{k+1},\ldots,b_n$ and $\sigma_k=a_k(1+a_1a_2\cdots a_n)/2$.

Compared with a $3$-dimensional magic rectangle, a magic rectangle set features each horizontal layer (an $a\times b$ array) with equal row sums and equal column sums, while vertical column sums are unrestricted. Although the necessary and sufficient conditions for the existence of a $3$-dimensional magic rectangle have been established in \cite[Theorem 6.1]{ZLZS}, it appears that no literature focuses on the existence of $3$-dimensional magic rectangles over finite groups. A further research direction is to examine the existence of $3$-dimensional magic rectangles over finite groups.

\end{document}